\numberwithin{equation}{section}
\numberwithin{figure}{section}
\theoremstyle{plain}
\newtheorem{thm}{Theorem}[section]
  \theoremstyle{plain}
  \newtheorem{cor}[thm]{Corollary}
  \theoremstyle{definition}
  \newtheorem{defn}[thm]{Definition}
  \theoremstyle{plain}
  \newtheorem{fact}[thm]{Fact}
  \theoremstyle{remark}
  \newtheorem{rem}[thm]{Remark}
 \theoremstyle{definition}
  \newtheorem{example}[thm]{Example}
  \theoremstyle{remark}
  \newtheorem{claim}[thm]{Claim}
  \theoremstyle{plain}
  \newtheorem{lem}[thm]{Lemma}
  \theoremstyle{plain}
  \newtheorem{prop}[thm]{Proposition}
\begin{document}
\begin{flushleft}
\global\long\def\NTPT{\operatorname{NTP}_{\operatorname{2}}}
\global\long\def\C{\mathfrak{C}}
\global\long\def\Aut{\operatorname{Aut}}
\global\long\def\tp{\operatorname{tp}}
\global\long\def\id{\operatorname{id}}
\global\long\def\ist{\operatorname{ist}}
\global\long\def\C{\mathfrak{C}}
\global\long\def\alt{\operatorname{alt}}
\global\long\def\st{\operatorname{st}}
\global\long\def\dom{\operatorname{dom}}
\global\long\def\acl{\operatorname{acl}}
\global\long\def\eq{\operatorname{eq}}

\par\end{flushleft}

 %% Independence 
\def\Ind#1#2{#1\setbox0=\hbox{$#1x$}\kern\wd0\hbox to 0pt{\hss$#1\mid$\hss} \lower.9\ht0\hbox to 0pt{\hss$#1\smile$\hss}\kern\wd0} 
\def\Notind#1#2{#1\setbox0=\hbox{$#1x$}\kern\wd0\hbox to 0pt{\mathchardef \nn="3236\hss$#1\nn$\kern1.4\wd0\hss}\hbox to 0pt{\hss$#1\mid$\hss}\lower.9\ht0 \hbox to 0pt{\hss$#1\smile$\hss}\kern\wd0} 

  \theoremstyle{definition}
  \newtheorem{defClaim}[thm]{Definition/Claim}

\begin{flushleft}
\global\long\def\ind{\mathop{\mathpalette\Ind{}}}
 \global\long\def\nind{\mathop{\mathpalette\Notind{}}}
\global\long\def\leftexp#1#2{{\vphantom{#2}}^{#1}{#2}}

\par\end{flushleft}

\title{Forking and dividing in $\NTPT$ theories}

\author{Artem Chernikov \and Itay Kaplan}\thanks{This work has been supported by the Marie Curie Early Stage Training Network MATHLOGAPS -- MEST-CT-2004-504029 -- and by the Marie Curie Research Training Network MODNET -- MRTN-CT-2004-512234}
\begin{abstract}
We prove that in theories without the tree property of the second
kind (which include dependent and simple theories) forking and dividing
over models are the same, and in fact over any extension base. As
an application we show that dependence is equivalent to bounded non-forking
assuming $\NTPT$.
\end{abstract}
\maketitle

\section{Introduction}

\subsection*{Background.\protect \\
}

The study of forking in the dependent (NIP) setting was initiated
by Shelah in full generality \cite{Sh783} and by Dolich in the case
of nice $o$-minimal theories \cite{Dol}. Further results appear
in \cite{Ad}, \cite{HP}, \cite{OnUs2} and \cite{Sta}. The main
trouble is that apparently non-forking independence outside of the
simple context no longer corresponds to a notion of dimension in any
possible way. Moreover it is neither symmetric nor transitive (at
least in the classical sense). However in dependent theories it corresponds
to invariance of types, which is undoubtedly a very important concept,
and it is a meaningful combinatorial tool.

\subsection*{Main results.\protect \\
}

The crucial property of forking in simple theories is that it equals
dividing (thus the useful concept -- forking -- becomes somewhat more
understandable in real-life situations). It is known that there are
dependent theories in which forking does not equal dividing in general
(for example in circular order over the empty set, see section \ref{sec:optimality-of-results}).
However there is a natural restatement of the question due to Anand
Pillay: whether forking and dividing are equal over models? After
failing to find a counter-example we decided to prove it instead.
And so the main theorem of the paper is:
\begin{thm}
\label{thm:MainThm}Let $T$ be an $\NTPT$ theory (a class which
includes dependent and simple theories). Then forking and dividing
over models are the same -- a formula $\varphi\left(x,a\right)$ forks
over a model $M$ iff it divides over it. 
\end{thm}
In fact, a more general result is attained. Namely that: 
\begin{thm}
\label{thm:MainThmGeneral}Let $T$ be $\NTPT$. Then for a set $A$,
the following are equivalent:
\begin{enumerate}
\item $A$ is an extension base for $\ind^{f}$ (non-forking) (see definition
\ref{def:basis}).
\item $\ind^{f}$ has left extension over $A$ (see definition \ref{def:PreInd}).
\item Forking equals dividing over $A$ (i.e. a formula $\varphi\left(x,b\right)$
divides over $A$ iff if forks over $A$).
\end{enumerate}
\end{thm}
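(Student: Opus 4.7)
The plan is to establish the equivalences in the cycle $(3) \Rightarrow (1) \Rightarrow (2) \Rightarrow (3)$. The first implication is essentially formal, the second is a soft manipulation of the extension base property, and the third carries the full combinatorial weight of $\NTPT$.

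For $(3) \Rightarrow (1)$: if forking equals dividing over $A$, then no consistent formula with parameters in $A$ forks over $A$, because a formula $\varphi(x;a)$ with $a \in A$ cannot divide---every $A$-indiscernible sequence beginning with $a$ is the constant sequence. A standard Zorn/compactness argument then extends every partial type over $A$ to a global type not forking over $A$, which is precisely the extension base property.

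For $(1) \Rightarrow (2)$: given $a \ind^f_A B$ and an auxiliary tuple $c$, I seek $c' \equiv_{aA} c$ with $ac' \ind^f_A B$. The plan is to apply the extension base hypothesis to $\mathrm{tp}(c/aA)$ viewed over the enlarged parameter set $aB$, producing a realization $c'$ with $c' \ind^f_{aA} B$; combined with $a \ind^f_A B$ and a basic transitivity property of non-forking, this yields $ac' \ind^f_A B$. The step is soft, modulo careful attention to transferring the extension base status from $A$ to $aA$.

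For $(2) \Rightarrow (3)$: this is the technical heart. Suppose $\varphi(x;a)$ forks over $A$, so $\varphi(x;a) \vdash \bigvee_{i<n} \psi_i(x;b_i)$ with each $\psi_i(x;b_i)$ dividing over $A$. I would proceed in two stages. First, I would establish a Kim-style lemma: using left extension, build for each $\psi_i(x;b_i)$ a \emph{strict} Morley sequence $(b_j^i)_{j<\omega}$ over $A$ extending $b_i$, and show under $\NTPT$ that $\{\psi_i(x;b_j^i) : j<\omega\}$ is inconsistent---so dividing is witnessed along every such sequence. Second, combine these rows into a single $A$-indiscernible array $(a_j, b_j^0, \ldots, b_j^{n-1})_{j<\omega}$ extending $(a, b_0, \ldots, b_{n-1})$ whose rows are still strict Morley, and run a pigeonhole argument: any realization of $\{\varphi(x;a_j)\}_j$ would have to satisfy $\psi_{i^*}(x;b_j^{i^*})$ for a fixed $i^*<n$ and infinitely many $j$, contradicting inconsistency of the $i^*$-th row. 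Hence $\{\varphi(x;a_j)\}_j$ is inconsistent and $\varphi(x;a)$ divides over $A$.

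The principal obstacle is the Kim-style lemma of the first stage. Plain non-forking is not strong enough to produce strict Morley sequences with the required strictness on the parameter side; the \emph{left} extension hypothesis is precisely the ingredient that remedies this, enabling the combinatorial $\NTPT$ array argument to go through. Assembling the $n$ strict rows into a single indiscernible array that preserves all dividing witnesses simultaneously is the other delicate point, and should be handled by an iterative construction built directly on left extension.
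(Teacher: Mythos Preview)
Your cycle $(3)\Rightarrow(1)\Rightarrow(2)\Rightarrow(3)$ diverges from the paper's structure, and the divergence is where the real gaps are.

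\medskip

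\textbf{On $(1)\Rightarrow(2)$.} Your plan is to realize $\mathrm{tp}(c/aA)$ so that $c'\ind^f_{aA}B$, and you acknowledge this requires ``transferring the extension base status from $A$ to $aA$''. That transfer is \emph{not} available: the paper's own Example~2 exhibits a dependent (hence $\NTPT$) theory, a model $M$, and an element $s$ such that forking $\neq$ dividing over $Ms$; equivalently $Ms$ is not an extension base even though $M$ is. So extension-base status for $A$ simply does not pass to $aA$, and your argument collapses at exactly the point you flagged as delicate. The paper does \emph{not} prove $(1)\Rightarrow(2)$ directly; it proves $(3)\Rightarrow(2)$ (Claim~\ref{cla:LeftExt}) and separately $(1)\Rightarrow(3)$, closing the equivalence that way.

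\medskip

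\textbf{On $(2)\Rightarrow(3)$.} Your two-stage outline (Kim-style lemma for strict Morley sequences, then pigeonhole on an array) is morally the right picture, but you have not explained how left extension for $\ind^f$ produces strict Morley sequences. It does not do so directly: one needs global types $p\supseteq\mathrm{tp}(b/A)$ with the \emph{two-sided} property that realizations $c\models p|_C$ satisfy $C\ind^f_A c$. The paper obtains such ``strictly invariant'' types only after proving the Broom Lemma (Lemma~\ref{lem:Broom}) and its corollary that forking implies quasi-dividing; this is the genuine technical core, and nothing in your proposal corresponds to it. Moreover, the paper does not run this machinery for $\ind^f$ over an arbitrary $A$ at all (since $\ind^f$ need not satisfy preservation of indiscernibles or ``stronger than invariance'' outside the dependent case). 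Instead it first proves forking $=$ dividing over \emph{models} using $\ind^u$, and then for an arbitrary $A$ satisfying (1) or (2) it picks a model $M\supseteq A$, arranges $M\ind^f_A b$, pushes the forking of $\varphi(x,b)$ up to $M$, and invokes the model case. Your sketch skips both the Broom Lemma and this reduction-to-models step, which together carry the argument.
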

So theorem \ref{thm:MainThm} is a corollary of \ref{thm:MainThmGeneral}
(types over models are finitely satisfiable, so (1) is true), and
of course:
\begin{cor}
If $T$ is $\NTPT$ and all sets are extension bases for non-forking,
then forking equals dividing. (This class contains simple theories,
$o$-minimal and $c$-minimal theories).
\end{cor}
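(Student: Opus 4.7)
The plan is to derive this corollary as a direct, essentially formal, consequence of Theorem \ref{thm:MainThmGeneral}, together with a separate verification of the extension-base hypothesis for each of the three named classes.

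For the main implication, I would fix an arbitrary set $A$ and observe that the standing hypothesis ``every set is an extension base for $\ind^{f}$'' is precisely clause (1) of Theorem \ref{thm:MainThmGeneral}, applied at $A$. By the equivalence (1)$\Leftrightarrow$(3) of that theorem, forking equals dividing over $A$. Since $A$ was arbitrary, forking equals dividing in $T$, which is the conclusion of the corollary.

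For the parenthetical list of examples, I would verify the hypothesis class by class. Simple theories are $\NTPT$, and in a simple theory every complete type over any set has a non-forking global extension (the standard Existence/Extension property), so every set is an extension base. $o$-minimal and $c$-minimal theories are dependent, hence $\NTPT$; for these I would invoke the known fact that every complete type over any parameter set admits a global invariant, and thus non-forking, extension. In the $o$-minimal case this is part of Dolich's analysis \cite{Dol}; for $c$-minimal theories one appeals to the analogous definability/invariance description of types in that context.

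There is no real obstacle at the level of this corollary: all of the content has been packaged into Theorem \ref{thm:MainThmGeneral}, whose nontrivial direction (1)$\Rightarrow$(3) is precisely what allows the mere existence of a non-forking extension of \emph{some} type over $A$ to propagate to an equality of forking and dividing on \emph{all} partial types over $A$. The only subtlety to keep in mind when applying it in the form above is that extension is required for \emph{every} parameter set, not merely for models; this is what rules out pathological examples such as the dense circular order over the empty set, which the paper will discuss in Section \ref{sec:optimality-of-results} as showing that the restriction to models cannot be dropped in general.
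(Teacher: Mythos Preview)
Your proposal is correct and matches the paper's approach: the corollary is stated immediately after Theorem~\ref{thm:MainThmGeneral} with no separate proof, since it is the direct specialization of (1)$\Rightarrow$(3) to every set $A$. Your justification of the parenthetical examples also agrees with the paper, which later cites \cite[2.14]{HP} for $o$-minimal and $c$-minimal theories being $\ind^{f}$-extensible and relies on standard existence for simple theories.
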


\subsection*{The idea of the proof.\protect \\
}

The idea is to generalize the proof of the theorem in simple theories.
There, {}``Kim's lemma'' was the main tool. The lemma says, that
in a simple theory, if $\varphi\left(x,a\right)$ divides over $A$,
then \emph{every} Morley Sequence over $A$ (i.e. an indiscernible
sequence $\left\langle a_{i}\left|i<\omega\right.\right\rangle $
such that for all $i<\omega$, $\tp\left(a_{i}/Aa_{0}\ldots a_{i-1}\right)$
does not fork over $A$ and $a_{i}\equiv_{A}a$) witnesses this. As
there is no problem to construct Morley sequences over any set, one
shows that forking equals dividing by constructing a Morley sequence
that starts with the parameters of the formulas witnessing forking.\\
To prove the parallel result in the $\NTPT$ context, we find a
new notion of independence, $\ind^{\ist}$ such that every $\ind^{\ist}$-Morley
sequence witnesses dividing. Then we show that this notion satisfies
{}``existence over a model'', i.e. that for every $a$, $a\ind_{M}^{\ist}M$.
For this we shall need the so-called {}``broom lemma''. Essentially
it says that if a formula is covered by finitely many formulas arranged
in a \textquotedbl{}nice position\textquotedbl{}, then we can throw
away the dividing ones, by passing to an intersection of finitely
many conjugates.

\subsection*{Applications.\protect \\
}

We give some corollaries, among them that in dependent theories forking
is type definable, has left extension over models (answering a question
of Itai Ben Yaacov), and that if $p$ is a global $\varphi$ type
which is invariant over a model, then it can be extended to a global
type invariant over the same model (strengthening a result that appears
in \cite{HP}).\\
Hans Adler asked in \cite{Ad} whether NIP is equivalent to boundedness
of non-forking. In section \ref{sec:BddForking+NTP2} we show that
assuming $\NTPT$, this is indeed the case. This generalizes a well-known
analogous result describing the subclass of stable theories inside
the class of simple theories. Finally in section \ref{sec:optimality-of-results}
we present 2 examples that show that the $\NTPT$ assumption is needed,
and explain why we work over models. These are variants of an example
due to Martin Ziegler of a theory in which forking does not equal
dividing over models.

\subsection*{Further remarks.\protect \\
}

In \cite{ArtyomTemp}, we give an example of a theory with IP, such
that forking is bounded (moreover, a global type does not fork over
a set iff it is finitely satisfiable in this set). This, together
with the result appearing in section \ref{sec:BddForking+NTP2}, completely
solves Adler's question from \cite{Ad} mentioned above.

\subsection*{Acknowledgments.\protect \\
}

We would like to thank Alex Usvyatsov for many helpful discussions
and comments, Martin Ziegler for allowing us to include his example,
Itai Ben Yaacov, Frank Wagner, the entire Lyon logic group for a very
fruitful atmosphere, and the MATHLOGAPS/MODNET networks which made
our collaboration possible.\\
The first author thanks Berlin logic group for organizing the seminar
on NIP which triggered his interest for the questions concerned.\\
\textcolor{red}{}\\
Finally, both authors are grateful to Vissarion for leading their
thoughts in the right direction.

\section{\label{sec:Perliminaries}Preliminaries}

\subsection*{Notation.\protect \\
}

Notations are standard.\\
As usual, $T$ is a first order theory; $\C$ is the monster model
(a big saturated model); all sets are subsets of $\C$ of size smaller
than $\left|\C\right|$ and all models are elementary substructures
of $\C$.\\
We shall not always distinguish between sets and sequences, i.e.
$a$ can be a singleton, a set, an $n$-tuple or a sequence of any
length of members of $\C$.\\
The variables $x,y$ are singletons or finite sequences.\\
For sets $A,B$ we write $AB$ for the union, and for an element
(or a sequence) $a$, we write $Aa$ for $A\cup\left\{ a\right\} $
(or $A\cup\mbox{im}\left(a\right)$). In some contexts, $ab$ will
denote the concatenation of the sequences $a$ and $b$ (for instance
when we write $ab\equiv cd$). \\
For us, $I,J$ denote infinite sequences.\\
A global type is a type over $\C$.

\subsection*{Preliminaries on dependent theories.\protect \\
}

Let us recall:
\begin{defn}
A theory $T$ has the \emph{independence property} if there is a formula
$\phi(x,y)$ and tuples $\left\{ a_{i}\left|i<\omega\right.\right\} $,
$\left\{ b_{u}\left|u\subseteq\omega\right.\right\} $ (in $\C$)
such that $\phi(a_{i},b_{u})$ if and only if $i\in u$. $T$ is \emph{dependent}
iff it does not have the independence property (also known as \emph{NIP}).
\end{defn}

\begin{defn}
\label{def:altRank}The alternation rank of a formula: $\alt\left(\varphi\left(x,y\right)\right)=$
\[
=\max\left\{ n<\omega\left|\exists\left\langle a_{i}\left|i<\omega\right.\right\rangle \mbox{ indiscernible, }\exists b:\varphi\left(a_{i},b\right)\leftrightarrow\neg\varphi\left(a_{i+1},b\right)\mbox{ for }i<n-1\right.\right\} \]
\end{defn}
\begin{fact}
$T$ is dependent iff every formula has finite alternation rank.
\end{fact}
To the best of our knowledge, this fact first appeared in \cite{poiThUnstable},
and is an easy exercise in the definition.

\subsection*{Pre-independence relations, dividing and forking.\protect \\
}

To make the presentation clearer, we chose to follow the style of
Adler in \cite{adlerTh-2005}, and define an abstract notion of independence.
By a pre-independence relation we shall mean a ternary relation $\ind$
on sets which satisfies one or more of the properties below. For a
more general definition of a pre-independence relation see e.g. \cite[Section 5]{Ad}.
Note that since normally our relation is not symmetric many properties
can be formulated both on the left side and on the right side. 
\begin{defn}
\label{def:PreInd}A pre-independence relation $\ind$ is an invariant
ternary relation on sets. We write $a\ind_{A}b$ for: $a$ is $\ind$-independent
from $b$ over $A$. The following are the properties we consider
for a pre-independence relation:
\begin{enumerate}
\item Monotonicity: If $aa'\ind_{A}bb'$ then $a\ind_{A}b$.
\item Base monotonicity: If $a\ind_{A}bc$ then $a\ind_{Ab}c$. 
\item Transitivity on the left (over $A$): $a\ind_{Ab}c$ and $b\ind_{A}c$
implies $ab\ind_{A}c$. 
\item Right extension (over $A$): if $a\ind_{A}b$ then for all $c$ there
is $c'\equiv_{Ab}c$ such that $a\ind_{A}bc'$.
\item Left extension (over $A$): if $a\ind_{A}b$ then for all $c$ there
is $c'\equiv_{Aa}c$ such that $ac'\ind_{A}b$.
\end{enumerate}
\end{defn}
\begin{rem}
We shall not discuss independence relations, but for completeness
we mention that an independence relation is a pre-independence relation
that satisfies (1) -- (3) and symmetry (i.e. $a\ind_{A}b$ iff $b\ind_{A}a$).\end{rem}
\begin{defn}
We say that a pre-independence relation is \emph{standard} if it satisfies
(1) -- (4) from definition \ref{def:PreInd}.
\end{defn}

\begin{defn}
\label{def:basis}We say that $A$ is an extension base for a pre-independence
relation $\ind$ if for all $a$, $a\ind_{A}A$.
\end{defn}
Now let us recall the definition of forking and dividing.
\begin{defn}
(dividing) Let $A$ be be a set, and $a$ a tuple. We say that the
formula $\varphi\left(x,a\right)$ \emph{divide}s over $A$ iff there
is a number $k<\omega$ and tuples $\left\{ a_{i}\left|i<\omega\right.\right\} $
such that
\begin{enumerate}
\item $\tp\left(a_{i}/A\right)=\tp\left(a/A\right)$.
\item The set $\left\{ \varphi\left(x,a_{i}\right)\left|i<\omega\right.\right\} $
is $k$-inconsistent (i.e. every subset of size $k$ is not consistent).
\end{enumerate}
In this case, we say that a formula $k$-divides.\end{defn}
\begin{rem}
From Ramsey and compactness it follows that $\varphi\left(x,a\right)$
divides over $A$ iff there is an indiscernible sequence over $A$,
$\left\langle a_{i}\left|i<\omega\right.\right\rangle $ such that
$a_{0}=a$ and $\left\{ \varphi\left(x,a_{i}\right)\left|i<\omega\right.\right\} $
is inconsistent.\end{rem}
\begin{defn}
We say that a type $p$ divides over $A$ iff there is a finite conjunction
of formulas from $p$ which divides over $A$. The notation $a\ind_{A}^{d}b$
means $\tp\left(a/Ab\right)$ does not divide over $A$.\end{defn}
\begin{fact}
\label{fac:dividing} (see \cite[1.4]{SheSimple}) The following are
equivalent for every $T$:
\begin{enumerate}
\item $a\ind_{A}^{d}b$.
\item For every indiscernible sequence $I$ over $A$ such that $b\in I$,
there is an indiscernible sequence $I'$ such that $I'\equiv_{Ab}I$
and $I'$ is indiscernible over $Aa$.
\item For every indiscernible sequence $I$ over $A$ such that $b\in I$,
there is $a'$ such that $a'\equiv_{Ab}a$ and $I$ is indiscernible
over $Aa'$.
\end{enumerate}
\end{fact}
\begin{defn}
(forking) Let $A$ be be a set, and $a$ a tuple. 
\begin{enumerate}
\item Say that the formula $\varphi\left(x,a\right)$ \emph{forks} over
$A$ if there are formulas $\psi_{i}\left(x,a_{i}\right)$ for $i<n$
such that $\varphi\left(x,a\right)\vdash\bigvee_{i<n}\psi_{i}\left(x,a_{i}\right)$
and $\psi_{i}\left(x,a_{i}\right)$ divides over $A$ for every $i<n$.
\item Say that a type $p$ forks over $A$ if there is a finite conjunction
of formulas from $p$ which forks over $A$.
\item The notation $a\ind_{A}^{f}b$ means: $\tp\left(a/Ab\right)$ does
not fork over $A$.
\end{enumerate}
\end{defn}
Note that:
\begin{rem}
$ $
\begin{enumerate}
\item If $\varphi\left(x,a\right)$ divides over $A$ then it forks over
$A$.
\item If $M\supseteq A$ is an $\left|A\right|^{+}$ saturated model and
$p\in S\left(M\right)$ does not divide over $A$, then it does not
fork over $A$.
\end{enumerate}
\end{rem}

\begin{rem}
$\ind^{f}$ is standard (see, e.g. \cite[section 5]{Ad}).
\end{rem}
Two other pre-independence relations we shall use are $\ind^{u}$
(finite satisfiability -- the $u$ comes from {}``ultrafilter''),
and $\ind^{i}$ (invariance).
\begin{defn}
We write $a\ind_{A}^{u}b$ when $\tp\left(a/Ab\right)$ is finitely
satisfiable in $A$.\end{defn}
\begin{rem}
\label{cla:CoHeir}$\ind^{u}$ is standard and satisfies left extension
over models. Every model is an extension base for $\ind^{u}$.\end{rem}
\begin{proof}
The fact that $\ind^{u}$ is standard can be seen in e.g. \cite[section 5]{Ad}.
For left extension over models: Consider inheritance ($\ind^{h}$)
over a model $M$: $a\ind_{M}^{h}b$ iff $\tp\left(a/Mb\right)$ is
an heir over $M$, iff $b\ind_{M}^{u}a$. It is well known that $\ind^{h}$
satisfies right extension over models, so the result follows. The
fact that every model is an extension base follows from the fact that
filters can be extended to ultrafilters.
\end{proof}
Let us recall the definition of Lascar strong types.
\begin{defn}
$\Aut f_{L}\left(\C/A\right)$ is the subgroup of all automorphisms
of $\C$ generated by the set $\left\{ f\in\Aut\left(\C/M\right)\left|M\supseteq A\mbox{ is some small model}\right.\right\} $.
We write $a\equiv_{A}^{L}b$ ($a$ is Lascar equivalent to $b$, or
$a$ and $b$ have the same Lascar strong type) if there is $\sigma\in\Aut f_{L}\left(\C/A\right)$
taking $a$ to $b$.\end{defn}
\begin{fact}
\label{fac:Lascar}(See e.g. in \cite{AvivThesis}) The relation $\equiv_{A}^{L}$
is an equivalence relation, and in fact it is the finest invariant
equivalence relation with boundedly many classes. It is also defined
as the transitive closure of the relation $E\left(a,b\right)$ saying
that there is an indiscernible sequence over $A$ containing both
$a$ and $b$.
\end{fact}
Now we can define another pre-independence relation:
\begin{defn}
We say that $a\ind_{A}^{i}b$ iff there is is a global type $p$ extending
$\tp\left(a/Ab\right)$ which is Lascar invariant over $A$: for every
$c,d$ such that $c\equiv_{A}^{L}d$ and every formula $\varphi\left(x,y\right)$
over $A$, $\varphi\left(x,c\right)\in p$ iff $\varphi\left(x,d\right)\in p$.\end{defn}
\begin{rem}
\label{rem:PerservesIndisc} In general, by Fact \ref{fac:Lascar},
if $I$ is an indiscernible sequence over $A$ and $a\ind_{A}^{i}I$
then $I$ is indiscernible over $Aa$. So $a\ind_{A}^{i}b$ iff for
every finitely many indiscernible sequences over $A$, $I_{1},\ldots,I_{n}$,
there are sequences $I_{1}',\ldots,I_{n}'$ such that $\left\langle I_{1}'\ldots I_{n}'\right\rangle \equiv_{Ab}\left\langle I_{1}\ldots I_{n}\right\rangle $
and $I_{i}'$ is indiscernible over $Aa$. Hence, it is easy to see
that $\ind^{i}$ is standard. For more details, see \cite[Corollary 35]{Ad}.\\
In addition, over a model $M$, $\ind_{M}^{i}$ is non-splitting
(invariance) -- $a\ind_{M}^{i}b$ iff $\tp\left(a/Mb\right)$ can
be extended to a global invariant type over $M$.\end{rem}
\begin{defn}
We say that $\ind$ is at least as strong as $\ind'$ if for every
$a,b$ and $A$, $a\ind_{A}b\Rightarrow a\ind'_{A}b$.\end{defn}
\begin{example}
$\ind^{u}$ is at least as strong as $\ind^{i}$ which is at least
as strong as $\ind^{f}$. See claim below.
\end{example}
By the remark above, when $\ind$ is at least as strong as $\ind^{i}$,
if $I$ is indiscernible over $A$ and $a\ind_{A}I$ then $I$ is
indiscernible over $Aa$. In this case, we'll say that $\ind$ preserves
indiscernibility. In fact, these two are equivalent (i.e. to preserve
indiscernibility and to be as strong as $\ind^{i}$) for standard
pre-independence relations: it follows from right extension and the
criterion given in \ref{rem:PerservesIndisc}.
\begin{rem}
\label{rem:invUniqExt} If $N$ is $\left|A\right|^{+}$ saturated,
and $p\in S\left(N\right)$ is an $A$-invariant type, then $p$ has
a unique extension to a global $A$-invariant type.\end{rem}
\begin{claim}
\label{cla:indI=00003DindF} $\ind^{i}$ is at least as strong as
$\ind^{f}$. If $T$ is dependent, then $\ind^{i}=\ind^{f}$.\end{claim}
\begin{proof}
The first statement is clear, and the second appears in \cite{Sh783}
and also in \cite{Ad}. 
\end{proof}

\subsection*{Generating indiscernible sequences.\protect \\
}

Recall the following fact:
\begin{fact}
\label{fac:morlySeq} Assume that $p$ is global $A$-invariant type.
Then $p$ generates an indiscernible sequence over $A$: $a_{0}\models p|_{A}$,
$a_{i+1}\models p|_{Aa_{0}\ldots a_{i}}$. The type of this indiscernible
sequence depends only on $p$, and will be denoted by $p^{\left(\omega\right)}|_{A}\in S^{\left(\omega\right)}\left(A\right)$.
The type we get after $n$ steps is denoted by $p^{\left(n\right)}|_{A}\in S^{n}\left(A\right)$. \end{fact}
\begin{defn}
$ $
\begin{enumerate}
\item A type $p$ is $\ind$-free over $A$ if for any $b$ such that $Ab\subseteq\dom\left(p\right)$
and every $a\models p|_{Ab}$, $a\ind_{A}b$.
\item A Morley sequence $\left\langle a_{i}\left|i<\omega\right.\right\rangle $
for $\ind$ with base $A$ over $B\supseteq A$ is an indiscernible
sequence over $B$, such that for all $i$, $a_{i}\ind_{A}Ba_{0}\ldots a_{i-1}$.
\end{enumerate}
\end{defn}
Note that if a global type $p$ is $\ind$-free and invariant over
$A$, then for every $B\supseteq A$, the sequence $p$ generates
over $B$ is a Morley sequence with base $A$ over $B$.

\subsection*{$\NTPT$ Theories.}
\begin{defn}
\label{def:TP2} A theory $T$ has $\mbox{TP}_{\mbox{2}}$ (the tree
property of the second kind) if there exists a formula $\varphi(x,y)$,
a number $k<\omega$ and an array of elements $\left\langle a_{i}^{j}\left|i,j<\omega\right.\right\rangle $
(in $\C$) such that:
\begin{itemize}
\item Every row is $k$-inconsistent: for every $i<\omega$ and $j_{0},\ldots,j_{k-1}<\omega$,
$\C\models\neg\left(\exists x\bigwedge_{l<k}\varphi\left(x,a_{i}^{j_{l}}\right)\right)$.
\item Every vertical path is consistent: for every function $\eta:\omega\to\omega$,
the set $\left\{ \varphi\left(x,a_{i,\eta\left(i\right)}\right)\left|i<\omega\right.\right\} $
is consistent.
\end{itemize}
We say that $T$ is $\NTPT$ when it does not have $\mbox{TP}_{\mbox{2}}$.\end{defn}
\begin{fact}
Every dependent theory as well as every simple one is $\NTPT$.\end{fact}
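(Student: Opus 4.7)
The fact splits into two claims: that simple theories are $\NTPT$, and that dependent theories are $\NTPT$. I would handle them separately.

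For the simple case, the plan is to show that $\mbox{TP}_{\mbox{2}}$ implies the full tree property $\mbox{TP}$; since simplicity is equivalent to $\mbox{NTP}$, this yields the conclusion. Given an array $\left(a_{i}^{j}\right)_{i,j<\omega}$ witnessing $\mbox{TP}_{\mbox{2}}$ for a formula $\phi(x,y)$ with $k$-inconsistent rows and consistent vertical paths, I reindex it as a tree by setting $b_{\eta}:=a_{\eta(|\eta|-1)}^{|\eta|-1}$ for each nonempty $\eta\in\omega^{<\omega}$. Then the immediate successors of any node $\eta$ form a relabelling of row $|\eta|$ of the array, hence are $k$-inconsistent, while every branch $\eta\in\omega^{\omega}$ selects exactly one element from each row, which is a vertical path in the array and hence consistent. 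Thus $\phi$ has $\mbox{TP}$, contradicting simplicity.

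For the dependent case, the plan is to show that any formula witnessing $\mbox{TP}_{\mbox{2}}$ must have the independence property. Given $\phi(x,y)$ and an array $\left(a_{i}^{j}\right)$ witnessing $\mbox{TP}_{\mbox{2}}$, I first use compactness to enlarge the array and a Ramsey-type extraction to replace it by one with mutually indiscernible rows, preserving $k$-inconsistency of rows and consistency of vertical paths. The idea is then to exploit the flexibility of $\mbox{TP}_{\mbox{2}}$: for each $S\subseteq\omega$, the consistency of an appropriately chosen mixed column path (picking column $0$ in the rows indexed by $S$ and a suitably distant column in the rows outside $S$) produces a realization $c_{S}$. Combined with the mutual indiscernibility and the $k$-inconsistency of rows, which together force $\phi(c_{S},\cdot)$ to hold on only a small controlled finite set of elements in each row, one aims to arrange parameters $b_{j}:=a_{0}^{j}$ for which $\models\phi(c_{S},b_{j})$ iff $j\in S$, exhibiting $IP$ for $\phi$.

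The main obstacle lies in the dependent case: securing the reverse direction of the iff, namely that $j\notin S$ implies $\neg\phi(c_{S},a_{0}^{j})$, does not automatically follow from $c_{S}$ lying on a path that selects a nonzero column in row $j$, since $k$-inconsistency alone only bounds the number of realized columns rather than specifying them. Resolving this either requires a further extraction step (strengthening the mutual indiscernibility to also control the behavior at column $0$) or a reformulation as an alternation contradiction: by mutual indiscernibility each row $I_{j}$ is indiscernible over the other rows, and combining this with the $k$-inconsistency and the consistency of arbitrarily many mixed paths directly violates finiteness of the alternation number (see definition \ref{def:altRank}(2)), contradicting dependence.
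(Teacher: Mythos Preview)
The paper's own ``proof'' of this fact is the single word \emph{Exercise}, so there is no argument in the paper to compare against; you have already supplied far more than the authors do.

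Your treatment of the simple case is correct and complete: setting $b_{\eta}:=a^{|\eta|-1}_{\eta(|\eta|-1)}$ makes the children of each node a copy of a row (hence $k$-inconsistent) and each branch a vertical path (hence consistent), so $\phi$ has $\mbox{TP}$, contradicting simplicity.

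For the dependent case your overall plan (show $\mbox{TP}_{2}\Rightarrow\mbox{IP}$ via a mutually indiscernible array and mixed paths) is the standard one, and you rightly isolate the genuine difficulty: path consistency only provides positive instances, so choosing a nonzero column in row $j$ does not by itself force $\neg\phi(c_{S},a^{j}_{0})$. The problem is that neither of your two proposed fixes is actually carried out, and the second is not correct as stated. The clause ``combining this with the $k$-inconsistency and the consistency of arbitrarily many mixed paths directly violates finiteness of the alternation number'' is not an argument: along any single row the number of $i$ with $\phi(c,a^{j}_{i})$ is below $k$, so the alternation of $\phi(c,\cdot)$ along that row is at most $2(k-1)$, which is perfectly compatible with finite $\mathrm{alt}(\phi)$; and mutual indiscernibility of the rows does not produce any indiscernible sequence built \emph{across} rows on which you could force unbounded alternation. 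So resolution (b) as written does not go through.

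Resolution (a) is the right direction but needs real content. One way that works: after extracting so that the \emph{sequence of rows} $(\bar a^{j})_{j<\omega}$ is itself indiscernible (not just mutually indiscernible), realise the first column by some $c$; by $k$-inconsistency each row has at most $k-1$ positive columns, so by pigeonhole some fixed $i^{*}\in\{1,\dots,k-1\}$ has $\neg\phi(c,a^{j}_{i^{*}})$ for infinitely many $j$, and after passing to that subsequence one has $\phi(c,a^{j}_{0})\wedge\neg\phi(c,a^{j}_{i^{*}})$ for all $j$. Now the pair sequence $\bigl((a^{j}_{0},a^{j}_{i^{*}})\bigr)_{j}$ is indiscernible and the formula $\psi(x;y,z)=\phi(x,y)\wedge\neg\phi(x,z)$ is realised along the whole sequence; feeding this back into the $\mbox{TP}_{2}$ array and iterating the extraction (or arguing via compactness over all finite $S$) one obtains $\mbox{IP}$ for $\phi$. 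The details are not entirely trivial, which is perhaps why the paper relegates the whole thing to an exercise.
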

\begin{proof}
The tree property of the second kind implies the tree property (so
every simple theory is $\NTPT$) and the Independence property.
\end{proof}
The tree property of the second kind was defined in \cite{SheSimple}.
There it is proved that a theory is non-simple (has the tree property)
iff it has the tree property of the first kind (which we shall not
define here) or the the tree property of the second kind.

\section{\label{sec:Main-results}Main results}

\subsection{The Broom lemma.\protect \\
}

We start with the main technical lemma. Here there are no assumptions
on $T$.
\begin{lem}
\label{lem:Broom}Suppose that $\ind$ satisfies all properties from
\ref{def:PreInd} but we demand that it satisfies left extension only
over $A$, and in addition that it preserves indiscernibility. Assume
that \[
\alpha\left(x,e\right)\vdash\psi\left(x,c\right)\vee\bigvee_{i<n}\varphi_{i}\left(x,a_{i}\right)\]
where

\begin{enumerate}[(A)]\item\label{enu:kdiv}For $i<n$, the formula
$\varphi_{i}\left(x,a_{i}\right)$ $k$-divides over $A$, as witnessed
by the indiscernible sequence $I_{i}=\left\langle a_{i,l}\left|l<\omega\right.\right\rangle $
where $a_{i,0}=a_{i}$.

\item\label{enu:ind}For each $i<n$ and $1\leq l$, $a_{i,l}\ind_{A}a_{i,<l}I_{<i}$
where $a_{i,<l}=a_{i,0}\ldots a_{i,l-1}$, and $I_{<i}=I_{0}\ldots I_{i-1}$.

\item$c\ind_{A}I_{<n}$.

\end{enumerate}

\underbar{Then} for some $m<\omega$ there is $\left\{ e_{i}\left|i<m\right.\right\} $
with $e_{i}\equiv_{A}e$ for $i<m$ and $\bigwedge_{i<m}\alpha\left(x,e_{i}\right)\vdash\psi\left(x,c\right)$.
In particular, if $\psi\left(x,c\right)=\bot$ (i.e. $\forall x\left(x\neq x\right)$)
, then $\left\{ \alpha\left(x,e_{i}\right)\left|i<m\right.\right\} $
is inconsistent. 

\end{lem}
\begin{proof}
By induction on $n$. For $n=0$ there is nothing to prove. \\
Assume that the claim is true for $n$ and we prove it for $n+1$.
Let $b_{0}=a_{n,0}\ldots a_{n,k-2}$ and $b_{1}=a_{n,1}\ldots a_{n,k-1}$
(where $k$ is from (\ref{enu:kdiv})). Since $\ind$ preserves indiscernibility,
as $c\ind_{A}I_{n}$ we have \[
cb_{1}\equiv_{A}cb_{0}.\]
 We build by induction on $j<k$ sequences $\left\langle I_{<n}^{l,j}\left|l\leq j\right.\right\rangle $
(so $I_{<n}^{l,j}=I_{0}^{l,j}\ldots I_{n-1}^{l,j}$) such that:
\begin{enumerate}
\item $I_{<n}^{l,j}=I_{0}^{l,j}\ldots I_{n-1}^{l,j}$ and each $I_{i}^{l,j}$
is of the same length as $I_{i}$,
\item $I_{<n}^{0,j}=I_{<n}$.
\item $I_{<n}^{l,j}ca_{n,l}\equiv_{A}I_{<n}^{0,j}ca_{n,0}$ for all $l\leq j$
and
\item For all $0\leq l<j$, $cI_{<n}^{j,j}I_{<n}^{j-1,j}\ldots I_{<n}^{l+1,j}\ind_{A}I_{<n}^{l,j}$
and $c\ind_{A}I_{<n}^{j,j}$ (which already follows from the previous
clauses).
\end{enumerate}
For $j=0$, use (2): $I_{<n}^{0,0}=I_{<n}$.\\
So suppose we have this sequence for $j$ and we build it for $j+1<k$.
\\
By (2), let $I_{<n}^{0,j+1}=I_{<n}$.\\
As $cb_{1}\equiv_{A}cb_{0}$ we can find some $J_{<n}^{l,j+1}$
for $1\leq l\leq j+1$ so that:\begin{equation}
J_{<n}^{j+1,j+1}J_{<n}^{j,j+1}\ldots J_{<n}^{1,j+1}cb_{1}\equiv_{A}I_{<n}^{j,j}I_{<n}^{j-1,j}\ldots I_{<n}^{0,j}cb_{0}.\tag{I}\label{eq:Aut}\end{equation}
By transitivity on the left and base monotonicity (and by (\ref{enu:ind}))
we have $cb_{1}\ind_{A}a_{n,0}I_{<n}$, and by left extension we can
find $\left\langle I_{<n}^{l,j+1}\left|1\leq l\leq j+1\right.\right\rangle $
such that \begin{equation}
I_{<n}^{j+1,j+1}I_{<n}^{j,j+1}\ldots I_{<n}^{1,j+1}cb_{1}\equiv_{A}J_{<n}^{j+1,j+1}J_{<n}^{j,j+1}\ldots J_{<n}^{1,j+1}cb_{1}\tag{II}\label{eq:LeftExt}\end{equation}
and\begin{equation}
\left\langle I_{<n}^{l,j+1}\left|1\leq l\leq j+1\right.\right\rangle cb_{1}\ind_{A}a_{n,0}I_{<n}.\tag{III}\label{eq:indi}\end{equation}

And so we have constructed $\left\langle I_{<n}^{l,j+1}\left|l\leq j+1\right.\right\rangle $.\\
Note that from equations (\ref{eq:Aut}) and (\ref{eq:LeftExt})
it follows that\begin{equation}
I_{<n}^{j+1,j+1}I_{<n}^{j,j+1}\ldots I_{<n}^{1,j+1}cb_{1}\equiv_{A}I_{<n}^{j,j}I_{<n}^{j-1,j}\ldots I_{<n}^{0,j}cb_{0}.\tag{IV}\label{eq:together}\end{equation}
Now to check that we have our conditions satisfied:\\
 (1) and (2) follows directly from construction.\\
(3): First of all, $I_{<n}ca_{n,0}\equiv_{A}I_{<n}^{1,j+1}ca_{n,1}$
by equation (\ref{eq:together}). For $1\leq l\leq j$, \[
I_{<n}ca_{n,0}\equiv_{A}I_{<n}^{l,j}ca_{n,l}\]
 by the hypothesis regarding $j$. By (\ref{eq:together}), \[
I_{<n}^{l,j}ca_{n,l}\equiv_{A}I_{<n}^{l+1,j+1}ca_{n,l+1}\]
 and so we have (3) for $l\leq j+1$. \\
(4) follows from (\ref{eq:indi}), the invariance of $\ind$ and
induction. \\
So, for $j=k-1$ we have $\left\langle I_{<n}^{l,k-1}\left|l\leq k-1\right.\right\rangle $.
We shall now use only this last sequence.\\
 There are some $\left\langle e_{l}\left|l<k\right.\right\rangle $
such that $e_{0}=e$ and for $0<l<k$, $e_{l}I_{<n}^{l,k-1}ca_{n,l}\equiv_{A}eI_{<n}ca_{n,0}$,
so applying some automorphism fixing $Ac$, we replace $a_{n,0}$
by $a_{n,l}$, $e$ by $e_{l}$ and $I_{<n}$ by $I_{<n}^{l,k-1}$.
So we get \[
\alpha\left(x,e_{l}\right)\vdash\psi\left(x,c\right)\vee\bigvee_{i<n}\varphi_{i}\left(x,a_{i}^{l,k-1}\right)\vee\varphi_{n}\left(x,a_{n,l}\right)\]
 where $a_{i}^{l,k-1}$ starts $I_{i}^{l,k-1}$. Hence $\alpha^{0}=\bigwedge_{l<k}\alpha\left(x,e_{l}\right)$
implies the conjunction of these formulas. But as $I_{n}$ witnesses
that $\varphi_{n}\left(x,a_{n}\right)$ is $k$ dividing, we have
the following:\[
\alpha^{0}\vdash\psi\left(x,c\right)\vee\bigvee_{i<n,l<k}\varphi_{i}\left(x,a_{i}^{l,k-1}\right).\]
 Define a new formulas $\psi^{r}\left(x,c^{r}\right)=\psi\left(x,c\right)\vee\bigvee_{i<n,r\leq l<k}\varphi_{i}\left(x,a_{i}^{l,k-1}\right)$
for $r\leq k$. By induction on $r\leq k$, we find $\alpha^{r}$
such that $\alpha^{r}$ is a conjunction of conjugates over $A$ of
$\alpha\left(x,e\right)$, and $\alpha^{r}\vdash\psi^{r}\left(x,c^{r}\right)$.
It will follow of course, that $\alpha^{k}\vdash\psi\left(x,c\right)$
as desired. For $r=0$, we already found $\alpha^{0}$. Assume we
found $\alpha^{r}$, so we have\\
 \[
\alpha^{r}\vdash\psi^{r+1}\left(x,c^{r+1}\right)\vee\bigvee_{i<n}\varphi_{i}\left(x,a_{i}^{r,k-1}\right)\]
One can easily see that the hypothesis of the lemma is true for this
implication (where $c=c^{r+1}$, and $I_{i}=I_{i}^{r,k-1}$) so by
the induction hypothesis (on $n$), there is some $\alpha^{r+1}$(which
is a conjunction of conjugates of $\alpha^{r}$ over $A$, and so
also of $\alpha$) such that $\alpha^{r+1}\vdash\psi^{r+1}\left(x,c^{r+1}\right)$. \end{proof}
\begin{defn}
\label{def:quasiDiv} We say that a formula $\alpha\left(x,e\right)$
quasi-divides over $A$ if there are $m<\omega$ and $\left\{ e_{i}\left|i<m\right.\right\} $
such that $e_{i}\equiv_{A}e$ and $\left\{ \alpha\left(x,e_{i}\right)\left|i<m\right.\right\} $
is inconsistent.
\end{defn}
So this lemma shows that under certain conditions, a forking formula
also quasi-divides.
\begin{rem}
The name of this lemma is due to its method of proof, which reminded
the authors (and also Itai Ben Yaacov who thought of the name) of
a sweeping operation.
\end{rem}

\subsection{On pre-independence relations in $\NTPT$.\protect \\
}

\subsection*{Existence of global free co-free types.\protect \\
}

The title of this section may seem a bit mysterious, but it will become
clearer with the next Proposition. Let $T$ be any theory. 
\begin{defn}
Let $\ind$ be a pre-independence relation. We say that $\ind$ has
\emph{finite character} if whenever $a\nind_{B}b$, there is a formula
$\varphi\left(x\right)$ over $Bb$ such that $\varphi\left(a\right)$
and for all $a'$ if $\varphi\left(a'\right)$ then $a'\nind_{B}b$.\end{defn}
\begin{rem}
This definition is taken from \cite{Ad}, where it is called strong
finite character, but since there is no room for confusion, we decided
to omit {}``strong''.\end{rem}
\begin{example}
All the pre-independence relations we mentioned satisfy this: $\ind^{f}$,
$\ind^{u}$ and $\ind^{i}$.\end{example}
\begin{prop}
\label{cla:symmetrizers}Assume that $\ind$ is a standard pre-independence
relation with finite character. Assume that $B$ is an extension base
for $\ind$ and that if $\varphi\left(x,a\right)$ forks over $B$,
then $\varphi\left(x,a\right)$ quasi-divides over $B$ (see \ref{def:quasiDiv};
in this case we say that forking implies quasi dividing over $B$).\\
\underbar{Then}: for every type $p$ over $B$,
\begin{enumerate}
\item There exists a global extension $q$, $\ind$-free over $B$, such
that for every $C\supseteq B$ and every $c\models q|_{C}$, $C\ind_{B}^{f}c$.
\item There exists a global extension $q'$ that doesn't fork over $B$
(i.e. $\ind^{f}$-free over $B$), such that for every $C\supseteq B$
and every $c\models q'|_{C}$, $C\ind_{B}c$.
\end{enumerate}
\end{prop}
\begin{proof}
(1): Let $a\models p$. By finite character, it is enough to see that
the following set is consistent\begin{eqnarray*}
p\left(x\right) & \cup & \left\{ \neg\varphi\left(x,b\right)\left|\varphi\left(x,y\right)\mbox{ is over }B\,\&\, b\in\C\,\&\,\varphi\left(a,y\right)\mbox{ forks over }B\right.\right\} \\
 & \cup & \left\{ \neg\psi\left(x,d\right)\left|\psi\left(x,z\right)\mbox{ is over }B\,\&\, d\in\C\,\&\forall c\left[\psi\left(c,d\right)\Rightarrow c\nind_{B}d\right]\right.\right\} .\end{eqnarray*}
Since then every global type $q$ that contains this set will suffice.\\
Indeed: assume not, then we have an implication of the form\[
p\vdash\bigvee_{i<n}\varphi_{i}\left(x,b_{i}\right)\vee\bigvee_{j<m}\psi_{j}\left(x,d_{j}\right)\]
 where $\varphi_{i}\left(x,y_{i}\right)$, $\psi_{j}\left(x,z_{j}\right)$
formulas over $B$, $\forall c\left[\psi_{j}\left(c,d_{j}\right)\Rightarrow c\nind_{B}d_{j}\right]$
and $\varphi_{i}\left(a,y_{i}\right)$ forks over $B$.\\
Note that $\bigvee_{i<n}\varphi_{i}\left(a,y_{i}\right)$ forks
over $B$, so we may assume $n=1$.\\
By assumption, $\varphi_{0}\left(a,y\right)$ quasi-divides over
$B$, so there are $h_{0},\ldots,h_{k-1}$ such that $h_{i}\equiv_{B}a$
and $\left\{ \varphi_{0}\left(h_{i},y\right)\left|i<k\right.\right\} $
is inconsistent. Denote $h=h_{0}h_{1}\ldots h_{k-1}$ and $r\left(x_{0},\ldots,x_{k-1}\right)=\tp\left(h/B\right)$.
Then \[
r\upharpoonright x_{i}\vdash\varphi_{0}\left(x_{i},b\right)\vee\bigvee_{j<m}\psi_{j}\left(x_{i},d_{j}\right).\]
So \[
r\vdash\bigwedge_{i<k}\left[\varphi_{0}\left(x_{i},b\right)\vee\bigvee_{j<m}\psi_{j}\left(x_{i},d_{j}\right)\right].\]
But\[
r\vdash\neg\exists z\left(\bigwedge_{i<k}\varphi_{0}\left(x_{i},z\right)\right),\]
so $r\vdash\bigvee_{i<k,j<m}\psi_{j}\left(x_{i},d_{j}\right)$.\\
The set $B$ is an extension base for $\ind$, so $h\ind_{B}B$,
and by right extension there is $h'\equiv_{B}h$ such that $h'\ind_{B}\left\{ d_{j}\left|j<m\right.\right\} $.
It follows that there are $i,j$ such that $\psi_{j}\left(h_{i}',d_{j}\right)$.
This is a contradiction to the choice of $\psi_{j}$.

(2): The proof is very similar. Let $a\models p$. We must show that\begin{eqnarray*}
p\left(x\right) & \cup & \left\{ \neg\varphi\left(x,b\right)\left|\varphi\left(x,y\right)\mbox{ is over }B\,\&\, b\in\C\,\&\,\varphi\left(x,b\right)\mbox{ forks over }B\right.\right\} \\
 & \cup & \left\{ \neg\psi\left(x,d\right)\left|\psi\left(x,z\right)\mbox{ is over }B\,\&\, d\in\C\,\&\forall c\left[\psi\left(a,c\right)\Rightarrow c\nind_{B}a\right]\right.\right\} \end{eqnarray*}
is consistent. If not, then $p\vdash\bigvee_{i<n}\varphi_{i}\left(x,b_{i}\right)\vee\bigvee_{j<m}\psi_{j}\left(x,d_{j}\right)$
and we may assume $n=1$. As $\varphi_{0}\left(x,b_{0}\right)$ forks
over $B$, it quasi-divides over $B$, so there are $e_{0},\ldots,e_{k-1}$
such that $e_{i}\equiv_{B}b_{0}$ and $\left\{ \varphi\left(x,e_{i}\right)\left|i<k\right.\right\} $
is inconsistent. Let $\bar{d}=\left\langle d_{i,j}\left|j<m\right.\right\rangle $
be such that $\bar{d}_{i}e_{i}\equiv_{B}\bar{d}b_{0}$. As $p$ is
over $B$, for every $i<k$,\[
p\vdash\varphi_{0}\left(x,e_{i}\right)\vee\bigvee_{j<m}\psi_{j}\left(x,d_{i,j}\right).\]
So it follows that $p\vdash\bigvee_{i,j}\psi_{j}\left(x,d_{i,j}\right)$.
Denote $\bar{d}'=\left\langle d_{i,j}\left|i<k,j<m\right.\right\rangle $.
As $B$ is an extension base for $\ind$, $\bar{d}'\ind_{B}B$, and
by right extension, wlog $\bar{d}'\ind_{B}a$. So there are $i,j$
such that $\psi_{j}\left(a,d'_{i,j}\right)$ which contradicts the
choice of $\psi_{j}$. 
\end{proof}
The following pre-independence relation is instrumental in the proof
of the main theorem.
\begin{defn}
\label{def:strictInv}We say that $\tp\left(a/Bb\right)$ is strictly
invariant over $B$ (denoted by $a\ind_{B}^{\ist}b$) if there is
a global extension $p$, which is Lascar invariant over $B$ (so $a\ind_{B}^{i}b$)
and for any $C\supseteq Bb$, if $c\models p|_{C}$ then $C\ind_{B}^{f}c$. \end{defn}
\begin{rem}
\label{rem:StrictNF}$ $\end{rem}
\begin{enumerate}
\item $\ind^{\ist}$ satisfies extension, invariance and monotonicity.
\item Strictly invariant types are a special case of strictly non-forking
types. We say that $\tp\left(a/Bb\right)$ strictly does not fork
over $B$ (denoted by $a\ind_{B}^{\st}b$) if there is a global extension
$p$, which does not fork over $B$, and for any $C\supseteq B$,
if $c\models p|_{C}$ then $C\ind_{B}^{f}c$. They coincide in dependent
theories, and in stable theories they are the same as non-forking.
The notion originated in \cite[5.6]{Sh783}. More on strict non-forking
can be found in \cite{Us1} and in \cite{AlexTemp}. 
\end{enumerate}
As $\ind^{i}$ has finite character, we conclude from (1) in Proposition
\ref{cla:symmetrizers} that:
\begin{cor}
\label{cor:istExist}Assume forking implies quasi dividing over $B$
and that $B$ is an extension base for $\ind^{i}$. Then $B$ is an
extension base for $\ind^{\ist}$.
\end{cor}

\subsection*{Working with an abstract pre-independence relation.\protect \\
}

Here we shall prove the following theorem:
\begin{thm}
\label{thm:MainAbs}Let $T$ be $\NTPT$. Then (1) implies (2) where:
\begin{enumerate}
\item There exists a standard pre-independence relation $\ind$ with left
extension over $B$, which preserves indiscernibility over $B$ and
such that $B$ is an extension base for it.
\item Forking equals dividing over $B$. 
\end{enumerate}
In addition, if $T$ is dependent then (1) and (2) are equivalent. 
\end{thm}

\subsection*{(1) implies (2).\protect \\
}

So assume $T$ is $\NTPT$, and that $\ind$ is a pre-independence
relation as in (1). We do not need left extension for this next claim:
\begin{lem}
\label{cla:globalTypeWitness}Assume $\varphi\left(x,a\right)$ divides
over $B$. Then there is a model $M\supseteq B$ and a global $\ind$-free
type over $B$, $p\in S\left(\C\right)$, extending $\tp\left(a/M\right)$,
such that every Morley sequence generated by $p$ over $M$ (as in
\ref{fac:morlySeq}) witnesses that $\varphi\left(x,a\right)$ divides.\end{lem}
\begin{proof}
Let $I=\left\langle b_{i}\left|i<\omega\right.\right\rangle $ be
a $B$-indiscernible sequence that witnesses $k$ dividing of $\varphi\left(x,a\right)$.
Let $N$ be a $\left(\left|B\right|+\left|T\right|\right)^{+}$ saturated
model containing $B$. By compactness we may assume that the length
of $I$ is $\left(2^{\left|N\right|+\left|T\right|}\right)^{+}$.
As $B$ is an extension base, we may assume that $I\ind_{B}N$. The
number of types over $N$ is bounded by $2^{\left|N\right|+\left|T\right|}$,
so $I$ has infinitely many elements with the same type $p$ over
$N$, and wlog they are the first $\omega$. Replace $I$ with $I\upharpoonright\omega$.
Let $B\subseteq M\subseteq N$ be any model such that $\left|M\right|\leq\left|B\right|+\left|T\right|$.
\\
Let $Q\left(x_{0},x_{1},\ldots\right)=\tp\left(I/N\right)$. Then
$Q$ is an invariant type over $M$ (as $M$ is a model and $Q$ is
Lascar invariant over $B$), and so is $p\left(x_{i}\right)=Q\upharpoonright x_{i}$.
By saturation, we can define a sequence $\left\langle I_{i}\left|i<\omega\right.\right\rangle $
in $N$ as in \ref{fac:morlySeq}: $I_{0}\models Q|_{M}$, $I_{i+1}\models Q|_{MI_{0}\ldots I_{i}}$.
Then $\left\langle I_{i}\left|i<\omega\right.\right\rangle $ is an
indiscernible sequence. Let $I_{i}=\left\langle a_{i,j}\left|j<\omega\right.\right\rangle $.
It follows that for every $\eta:\omega\to\omega$, $a_{0,\eta\left(0\right)}a_{1,\eta\left(1\right)}\ldots\equiv_{M}a_{0,0}a_{1,0}\ldots$,
as both sequences satisfy the type $p^{\left(\omega\right)}|_{M}$.
\\
As $T$ is $\NTPT$, $\left\{ \varphi\left(x,a_{i,0}\right)\left|i<\omega\right.\right\} $
is inconsistent (otherwise $\left\{ \varphi\left(x,a_{i,j}\right)\left|i,j<\omega\right.\right\} $
witnesses that $T$ has the tree property of the second kind because
of the choice of $I$). \\
By \ref{rem:invUniqExt}, the type $p$ has a unique extension
to a global $\ind$-free type over $B$ (which we shall also call
$p$).\\
Let $a'\models p|_{M}$, then $a'\equiv_{B}a$, so after applying
an automorphism over $B$ (and changing $M$), we may assume that
$p$ extends $\tp\left(a/M\right),$ and it is the required type:
it is $\ind$-free (as $Q$ is), and there is a Morley sequence generated
by $p$ that witnesses dividing, so every such sequence does so as
well.\end{proof}
\begin{cor}
\label{cor:NTP2ForkImWeakDiv}Forking implies quasi dividing over
$B$.\end{cor}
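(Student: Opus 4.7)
The plan is to reduce the statement to the Broom lemma \ref{lem:Broom}. Suppose $\alpha(x,e)$ forks over $A$; then by definition there are formulas $\varphi_0(x,a_0),\ldots,\varphi_{n-1}(x,a_{n-1})$, each dividing over $A$ (say $\varphi_i(x,a_i)$ is $k_i$-dividing), such that $\alpha(x,e)\vdash\bigvee_{i<n}\varphi_i(x,a_i)$. I aim to construct indiscernible sequences $I_i$ witnessing this dividing and meeting the hypotheses of the Broom lemma, and then apply the lemma with $\psi=\bot$ to obtain quasi-dividing of $\alpha(x,e)$.

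For each $i<n$, apply claim \ref{cla:globalTypeWitness} to $\varphi_i(x,a_i)$ to obtain a small model $M_i\supseteq A$ and a global $\ind$-free type $p_i\in S(\C)$ over $A$ extending $tp(a_i/M_i)$, with the property that every Morley sequence generated by $p_i$ over $M_i$ witnesses $k_i$-dividing of $\varphi_i(x,a_i)$. Build $I_i=\langle a_{i,l}:l<\omega\rangle$ by induction on $i<n$: having constructed $I_{<i}$, set $a_{i,0}:=a_i$ and, for $l\geq 1$, pick $a_{i,l}\models p_i|_{M_iI_{<i}a_{i,<l}}$. This is possible since $p_i$ is a global type.

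Because $a_i\models p_i|_{M_i}$ and, by restriction, $a_{i,l}\models p_i|_{M_ia_{i,<l}}$ for every $l\geq 1$, the sequence $I_i$ is a Morley sequence of $p_i$ over $M_i$. In particular it is $M_i$-indiscernible (hence $A$-indiscernible), and by claim \ref{cla:globalTypeWitness} the set $\{\varphi_i(x,a_{i,l}):l<\omega\}$ is $k_i$-inconsistent, giving hypothesis (1) of the Broom lemma. The $\ind$-freeness of $p_i$ applied to $a_{i,l}\models p_i|_{M_iI_{<i}a_{i,<l}}$ yields $a_{i,l}\ind_A I_{<i}a_{i,<l}$ for $l\geq 1$, which is hypothesis (2). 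Choosing $\psi(x):=x\neq x$ with $c=\emptyset$ makes hypothesis (3) $c\ind_A I_{<n}$ trivial, so the Broom lemma produces $m<\omega$ and $e_0,\ldots,e_{m-1}$ with $e_j\equiv_A e$ such that $\bigwedge_{j<m}\alpha(x,e_j)$ is inconsistent, i.e.\ $\alpha(x,e)$ quasi-divides over $A$.

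The main point to get right is a single iterative construction in which each $I_i$ simultaneously starts with the prescribed element $a_i$, is $A$-indiscernible, witnesses dividing of $\varphi_i(x,a_i)$, and is $\ind$-independent over $A$ from $I_{<i}$ in the precise sense demanded by the Broom lemma. The key observation is that the global type $p_i$ supplied by claim \ref{cla:globalTypeWitness} already satisfies $a_i\models p_i|_{M_i}$, so beginning the Morley sequence at $a_i$ is automatic; enlarging the base of the Morley construction from $M_i$ to $M_iI_{<i}$ preserves the Morley-sequence property over $M_i$ (and hence the dividing witness) while supplying the additional independence from $I_{<i}$ via $\ind$-freeness.
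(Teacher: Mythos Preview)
Your proposal is correct and follows essentially the same route as the paper: obtain global $\ind$-free types $p_i$ via claim~\ref{cla:globalTypeWitness}, generate the sequences $I_i$ as Morley sequences over the enlarged base $M_iI_{<i}$ so that hypothesis~(2) of the Broom lemma holds, and apply the lemma with $\psi=\bot$. The only cosmetic difference is that the paper lets $I_0$ be an arbitrary indiscernible witness for $\varphi_0$, whereas you uniformly use $p_0$; your version is slightly cleaner since it makes hypothesis~(2) for $i=0$ immediate.
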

\begin{proof}
Suppose $\varphi\left(x,a\right)$ forks over $B$, then $\varphi\left(x,a\right)\vdash\bigvee_{i<n}\varphi_{i}\left(x,a_{i}\right)$
where for all $i<n$, $\varphi_{i}\left(x,a_{i}\right)$ divides over
$B$. By Lemma \ref{cla:globalTypeWitness}, for $i<n$, there are
models $M_{i}\supseteq B$ and types $p_{i}$ which are global $\ind$-free
extension of $\tp\left(a_{i}/B\right)$. Let $I_{0}$ be some indiscernible
sequence witnessing dividing of $\varphi_{0}\left(x,a_{0}\right)$.
For $0<i$, let $I_{i}=\left\langle a_{i,l}\left|l<\omega\right.\right\rangle $
be a Morley sequence generated by $p_{i}$ as follows: $a_{i,0}=a_{i}\models p_{i}|_{M_{i}}$,
and for all $j>0$, $a_{i,l+1}\models p_{i}|_{M_{i}I_{<i}a_{i,\leq l}}$.
This will set us in the situation of the broom lemma \ref{lem:Broom}
hence $\varphi$ quasi-divides over $B$.
\end{proof}
For the next claims, let $A$ be any set.\\
The importance of $\ind^{\ist}$ lies in the following lemma, which
is analogous to {}``Kim's Lemma'' (see \cite[2.1]{KimForking}).
\begin{lem}
\label{lem:Kim}If $\varphi\left(x,a\right)$ divides over $A$, and
$\left\langle b_{i}\left|i<\omega\right.\right\rangle $ is a sequence
satisfying $b_{i}\equiv_{A}a$ and $b_{i}\ind_{A}^{\ist}b_{<i}$.
Then $\left\{ \varphi\left(x,a_{i}\right)\left|i<\omega\right.\right\} $
is inconsistent. In particular, if $\left\langle b_{i}\left|i<\omega\right.\right\rangle $
is an indiscernible sequence then it witnesses dividing of $\varphi\left(x,a\right)$. \end{lem}
\begin{proof}
Wlog $b_{0}=a$. Let $I$ be an indiscernible sequence witnessing
the dividing of $\varphi\left(x,a\right)$ over $A$. We build by
induction on $n$ sequences $I_{i}=\left\langle a_{i,j}\left|j<\omega\right.\right\rangle $
for $i<n$ such that
\begin{itemize}
\item Each $I_{i}$ is indiscernible over $AI_{<i}a_{>i,0}$ (where $a_{>i,0}=a_{i+1,0}\ldots a_{n-1,0}$).
\item For $i<\omega$, $I_{i}\equiv_{A}I$.
\item $a_{i,0}=b_{i}$.
\end{itemize}
This is enough, because then by compactness we can find an infinite
such array and then if $\left\{ \varphi\left(x,b_{i}\right)\left|i<\omega\right.\right\} $
is consistent, we reach a contradiction to $\NTPT$: In the infinite
array $\left\langle a_{i,j}\left|i,j<\omega\right.\right\rangle $,
for every function $\eta:\omega\to\omega$ and every $n$, one may
show by decreasing induction on $i\leq n$ (starting with $i=n$),
that\[
a_{0,\eta\left(0\right)}\ldots a_{n-1,\eta\left(n-1\right)}\equiv_{A}a_{0,\eta\left(0\right)}\ldots a_{i-1,\eta\left(i-1\right)}a_{i,0}\ldots a_{n-1,0}.\]
And this shows that every vertical path has the same type, but each
row is $k$-inconsistent for the same $k$ (because $I_{i}\equiv_{A}I$).
\\
For $n\leq1$ it is clear. Suppose we have built these sequences
up to $n$ and we consider $n+1$. Denote our array of $n$ rows by
$I_{<n}$. By right extension, there is $J_{<n}\equiv_{Ab_{<n}}I_{<n}$
such that $b_{n}\ind_{A}^{\ist}J_{<n}$. Hence also $J_{<n}\ind_{A}^{f}b_{n}$.
As $b_{n}\equiv_{A}a$, there is an indiscernible sequence $I'\equiv_{A}I$
starting with $b_{n}$. By \ref{fac:dividing}, there is an $A$-indiscernible
sequence $J_{n}$ such that $J_{n}\equiv_{Ab_{n}}I'$ and $J_{n}$
is indiscernible over $J_{<n}$. \\
Now it is easy to check that the conditions we demanded are met
with this new array. The only non-trivial one is the first condition:
$J_{n}$ is indiscernible over $J_{<n}$ by construction. For every
$i<n$, $J_{i}$ is indiscernible over $AJ_{<i}b_{>i}$ by the induction
hypothesis (where $b_{>i}=b_{i+1}\ldots b_{n-1}$). As $b_{n}\ind_{A}^{i}J_{<n}$,
by the base monotonicity of $\ind^{i}$ it follows that $b_{n}\ind_{AJ_{<i}b_{>i}}^{i}J_{i}$,
and as $\ind^{i}$ preserves indiscernibility, it follows that $J_{i}$
is indiscernible over $AJ_{<i}b_{>i}b_{n}$. \end{proof}
\begin{rem}
In fact we need less than Lemma \ref{lem:Kim}. For our needs, it
suffices to see that if $\varphi\left(x,a\right)$ divides over $A$,
and there exists $p$, a global $\ind$-free type over $A$, containing
$\tp\left(a/A\right)$, then every Morley sequence $p$ generates
(over a model $M\supseteq A$) witnesses dividing. The proof of this
fact is a bit easier: Assume that $I$ witnesses dividing, and that
$N$ is $\left|M\right|^{+}$ saturated. Let $c\models p|_{N}$. Then
$c\ind_{A}^{\ist}N$ and in particular $N\ind_{A}^{f}c$, so (by \ref{fac:dividing})
we may find $I'$ such that $cI'\equiv_{A}aI$ and $I'$ is indiscernible
over $N$. Now, as in the proof of \ref{cla:globalTypeWitness}, we
define $I_{i}\models\tp\left(I'/N\right)|_{MI_{<i}}$ in $N$. Then,
every vertical path realizes the type $p^{\left(\omega\right)}|_{M}$
and we get a contradiction. \end{rem}
\begin{cor}
If $A$ is an extension base for $\ind^{ist}$ , then forking equals
dividing over $A$.\end{cor}
\begin{proof}
Suppose $\varphi\left(x,a\right)\vdash\bigvee_{i<n}\varphi_{i}\left(x,a_{i}\right)$,
each $\varphi_{i}\left(x,a_{i}\right)$ divides over $A$. Let $\bar{a}=aa_{0}\ldots a_{n-1}$
and let $p=\tp\left(\bar{a}/A\right)$. As $\bar{a}\ind_{A}^{\ist}A$,
by definition there is $q$, a global $\ind^{\ist}$-free type over
$A$, containing $p$.\\
Let $\left\langle \bar{a}^{j}=a^{j}a_{0}^{j}\ldots a_{n-1}^{j}\left|j<\omega\right.\right\rangle $
be a Morley sequence generated by $q$ over a model $M$ containing
$A$. It is enough to see that $\left\{ \varphi\left(x,a^{j}\right)\left|j<\omega\right.\right\} $
is inconsistent (as it is an indiscernible sequence whose elements
have the same type as $a$ over $A$). \\
If this set is consistent, let $c$ realize it. Then for all $j<\omega$,
there is $i_{j}<n$ such that $\varphi\left(c,a_{i_{j}}^{j}\right)$,
so there is $\iota<n$ and infinitely many $j$'s such that $\iota=i_{j}$.
Then $\left\{ \varphi_{i_{0}}\left(x,a_{\iota}^{j}\right)\left|i_{j}=\iota\right.\right\} $
is consistent -- a contradiction to \ref{lem:Kim}.\end{proof}
\begin{lem}
\label{lem:istExist}The set $B$ (from our assumptions) is an extension
base for $\ind^{\ist}$ .\end{lem}
\begin{proof}
Forking implies quasi-dividing over $B$ by \ref{cor:NTP2ForkImWeakDiv},
and $B$ is an extension base for $\ind^{i}$ by our assumption (because
$\ind$ is at least as strong as $\ind^{i}$), so the lemma follows
immediately from \ref{cor:istExist}.
\end{proof}
Summing up, we have
\begin{cor}
\label{cor:FinalCor}Forking equals dividing over $B$.
\end{cor}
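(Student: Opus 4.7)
The plan is essentially to chain together the two claims immediately preceding this corollary. Claim \ref{cla:AExtBase} establishes that $A$ is an extension base for $\ind^{ist}$, working under the standing assumptions that $T$ is $\NTPT$, that $\ind$ satisfies (1)--(7) together with (8) over $A$ and has left extension over $A$, and that $A$ is an extension base for $\ind$. The unlabeled claim directly above it states that whenever $B$ is an extension base for $\ind^{ist}$, forking and dividing coincide over $B$. Taking $B := A$ yields the corollary immediately.

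To make this explicit: given a formula $\varphi(x,a)$ that forks over $A$, write $\varphi(x,a) \vdash \bigvee_{j<n}\varphi_j(x,a_j)$ with each $\varphi_j(x,a_j)$ dividing over $A$. By Claim \ref{cla:AExtBase} the tuple $\bar a = a a_0 \cdots a_{n-1}$ admits a global extension $q$ of $\mathrm{tp}(\bar a/A)$ that is strictly invariant over $A$, in particular $\ind^{ist}$-free over $A$. Replacing $A$ by a model $M \supseteq A$ using invariance and extension, generate a Morley sequence $\langle \bar a^i \mid i<\omega\rangle$ for $q$ over $M$. Then each row $\langle a_j^i \mid i<\omega\rangle$ is, by Kim's lemma for $\ind^{ist}$ (Claim \ref{cla:KimsLemma}), a witness to the dividing of $\varphi_j(x,a_j)$, and the consistency of $\{\varphi(x,\bar a^i) \mid i<\omega\}$ would force one of the rows to be path-consistent, contradicting $\NTPT$.

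There is no real obstacle here: the whole technical burden has already been discharged in the broom lemma \ref{lem:Broom}, in Claim \ref{cla:globalTypeWitness} (global witnesses to dividing), in Corollary \ref{cor:NTP2ForkImWeakDiv} (forking implies quasi-dividing), in the Kim-style Claim \ref{cla:KimsLemma}, and in Claim \ref{cla:AExtBase}. The corollary is the payoff: it is simply the instantiation of the abstract equivalence ``extension base for $\ind^{ist}$ $\Rightarrow$ forking equals dividing'' at the concrete base $A$ for which we have just verified the hypothesis.
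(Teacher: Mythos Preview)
Your proposal is correct and takes exactly the same approach as the paper, whose proof reads in full ``Just combine the last two claims.'' Your first paragraph is precisely that combination, and your second paragraph simply unpacks the proof of the unlabeled claim with $B:=A$; the only cosmetic slip is writing $\{\varphi(x,\bar a^i)\mid i<\omega\}$ where you mean $\{\varphi(x,a^i)\mid i<\omega\}$ for the first coordinate $a^i$ of $\bar a^i$.
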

By this we have proved one direction of Theorem \ref{thm:MainAbs}.

\subsection*{(2) implies (1).\protect \\
}

Here we assume that $T$ is dependent and that forking equals dividing
over $B$. We shall prove that $\ind^{f}$ satisfy all the demands
that appear in (1) in Theorem \ref{thm:MainAbs}. Note that by \ref{cla:indI=00003DindF},
$\ind^{f}=\ind^{i}$, and $\ind^{f}$ is standard. We are left with
showing that $B$ is an extension base for $\ind^{f}$ and that there
is left extension over $B$.\\
Since no type divides over its domain, we get
\begin{claim}
\label{cla:BasisNF}(No need for NIP) $B$ is an extension base for
$\ind^{f}$.
\end{claim}

\begin{claim}
\label{cla:LeftExt}(No need for NIP) We have left extension for $\ind^{f}$
over $B$.\end{claim}
\begin{proof}
Suppose $a\ind_{B}^{f}b$ and we have some $c$. We want to find some
$c'\equiv_{Ba}c$ such that $c'a\ind_{B}^{f}b$. Let $p=\tp\left(c/Ba\right)$.
We need to show that the following set is consistent: \[
p\left(x\right)\cup\left\{ \neg\varphi\left(x,a,b\right)\left|\varphi\mbox{ is over }B\mbox{ and }\varphi\left(x,y,b\right)\mbox{ divides over }B\right.\right\} .\]
 If not, then $p\left(x\right)\vdash\bigvee_{i<n}\varphi_{i}\left(x,a,b\right)$
where $\varphi_{i}\left(x,y,b\right)$ divides over $B$.\\
 So $\psi\left(x,y,b\right):=\bigvee_{i<n}\varphi_{i}\left(x,y,b\right)$
forks over $B$, hence divides over $B$. Assume that $I=\left\langle b_{i}\left|i<\omega\right.\right\rangle $
is an indiscernible sequence that witnesses dividing (with $b_{0}=b$).
By \ref{fac:dividing}, there is $I'\equiv_{Bb}I$ such that $I'$
is indiscernible over $Ba$ and wlog $I'=I$. The type $p$ is over
$Ba$, so $p\left(x\right)\vdash\psi\left(x,a,b_{i}\right)$ for all
$i$. But this is a contradiction as $p$ is consistent.

This concludes the proof of \ref{thm:MainAbs}.
\end{proof}

\subsection*{More conclusion from forking = dividing.\protect \\
}

Here there are no assumption on the theory $T$.
\begin{lem}
Assume forking equals dividing over $B$. Then we have
\begin{enumerate}
\item $a\ind_{B}^{f}a$ iff $a\in\acl\left(B\right)$.
\item $a\ind_{B}^{f}b$ iff $a\ind_{\acl\left(B\right)}^{f}b$ iff $\acl\left(Ba\right)\ind_{B}^{f}b$
iff $a\ind_{B}^{f}\acl\left(Bb\right)$.
\end{enumerate}
\end{lem}
\begin{proof}
(2): Every indiscernible sequence $I$ over $B$ is indiscernible
over $\acl\left(B\right)$: Every 2 increasing sub-sequences from
$I$ have the same Lascar strong type over $B$. As every model containing
$B$ contains $\acl\left(B\right)$, they have the same type over
$\acl\left(B\right)$. It follows that a formula divides over $B$
iff it divides over $\acl\left(B\right)$. Hence $a\ind_{\acl\left(B\right)}^{f}b$
implies $a\ind_{B}^{f}b$. \\
Assume that $a\ind_{B}^{f}b$, and assume that $I$ is a $B$-indiscernible
sequence starting with $b$. Then there is an indiscernible sequence
$I'\equiv_{Bb}I$ such that $I'$ is indiscernible over $Ba$. So
it is also indiscernible over $\acl\left(Ba\right)$. This shows that
$\acl\left(Ba\right)\ind_{B}^{f}b$ (by \ref{fac:dividing}). By right
extension, there is $a'\equiv_{Bb}a$ such that $a'\ind_{B}^{f}\acl\left(Bb\right)$.
But every automorphism fixing $Bb$ pointwise fixes $\acl\left(Bb\right)$
setwise, so $a\ind_{B}^{f}\acl\left(Bb\right)$. By base monotonicity,
we get $a\ind_{\acl\left(B\right)}^{f}b$.\\
The rest follows from monotonicity.\\
(1): Assume that $a\in\acl\left(B\right)$, then since $a\ind_{B}^{f}B$,
it follows from (2) that $a\ind_{B}a$. On the other hand, if $a\ind_{B}^{f}a$,
then the formula $x=a$ does not divide over $B$, so there are not
infinitely many realizations of $\tp\left(a/B\right)$, so this type
is algebraic and we are done.
\end{proof}

\subsection{Applying the previous sections.\protect \\
}

Here we assume $T$ is $\NTPT$ unless stated otherwise.
\begin{cor}
\label{cor:main} Forking equals dividing over models.\end{cor}
\begin{proof}
We use Theorem \ref{thm:MainAbs} with $\ind=\ind^{u}$. We saw in
\ref{cla:CoHeir} that $\ind^{u}$ satisfies all the demands.
\end{proof}
We saw that if the conditions of Theorem \ref{thm:MainAbs} on the
existence of $\ind$ and $B$ are met, then forking equals dividing,
and moreover $B$ is an extension base for $\ind^{\ist}$. So in this
case we can use our version of {}``Kim's lemma''. It gives more
information than just {}``forking equals dividing'', so naturally
we are interested in knowing when this happens. 
\begin{lem}
Suppose $\ind$ is a standard pre-independence relation. Moreover,
assume that every set containing $B$ is an extension base for $\ind$.
Then $\ind$ has left extension over $B$.\end{lem}
\begin{proof}
Assume $a\ind_{B}b$ and we are given $c$. We want to find $c'\equiv_{Ba}b$
such that $ac'\ind_{B}b$. Well, by assumption $c\ind_{Ba}Ba$, so
by right extension there is $c'\equiv_{Ba}c$ such that $c'\ind_{Ba}Bab$.
This means that $c'\ind_{Ba}b$, so by transitivity we get $c'a\ind_{B}b$
as requested. \end{proof}
\begin{defn}
\label{def:goodbase}If $B$ satisfies the condition of the previous
lemma, we say that $B$ is a \emph{good extension base}.\end{defn}
\begin{cor}
\label{cor:GoodBasis}If $B$ is a good extension base for a standard
pre-independence relation $\ind$, and in addition $\ind$ is at least
as strong as $\ind^{i}$, then $B$ is a good extension base for $\ind^{\ist}$
as well. In particular, forking equals dividing over $B$. 
\end{cor}
For instance, this corollary is true if $B$ is a good extension base
for $\ind^{i}$. In dependent theories, since $\ind^{i}=\ind^{f}$,
we have
\begin{cor}
If $T$ is dependent and for every $A$ and $p\in S\left(A\right)$,
$p$ does not fork over $A$, then every set is an extension base
for $\ind^{\ist}$ and forking equals dividing.
\end{cor}
This corollary is true for $o$-minimal theories and $c$-minimal
theories (see \cite[2.14]{HP}).\\

Now we turn to the proof of the main Theorem \ref{thm:MainThmGeneral}.
We abandon for a moment our desire to find extension basis for $\ind^{\ist}$
and concentrate on forking and dividing. In the end we shall conclude
a corollary which is stronger than both \ref{cor:main} and \ref{cor:GoodBasis}.
\begin{claim}
\label{cla:ForkingPreserves} ($T$ any theory) Assume that $a\ind_{B}^{f}b$
and $\varphi\left(x,b\right)$ forks over $B$, then $\varphi\left(x,b\right)$
forks over $Ba$ as well.\end{claim}
\begin{proof}
Assume $\varphi\left(x,b\right)$ forks over $B$, so there are $n<\omega$,
$\varphi_{i}\left(x,y_{i}\right)$ and $b_{i}$ for $i<n$ such that
$\varphi_{i}\left(x,b_{i}\right)$ divides over $B$ and $\varphi\left(x,b\right)\vdash\bigvee_{i<n}\varphi_{i}\left(x,b_{i}\right)$.
By extension, we may assume $a\ind_{A}^{f}b\left\langle b_{i}\left|i<n\right.\right\rangle $.
By \ref{fac:dividing}, $\varphi_{i}\left(x,b_{i}\right)$ divides
over $Ba$. Hence $\varphi\left(x,b\right)$ forks over $Ba$.\end{proof}
\begin{thm}
\label{thm:MainForking} For a set $B$ the following are equivalent:
\begin{enumerate}
\item Forking equals dividing over $B$.
\item $B$ is an extension base for $\ind^{f}$ (i.e. types over $B$ do
not fork over $B$).
\item $\ind^{f}$ has left extension over $B$.
\end{enumerate}
\end{thm}
\begin{proof}
We saw that (1) implies (2) and (3) in \ref{cla:BasisNF} and \ref{cla:LeftExt}.
Assume that (2) or (3) are true. Assume that $\varphi\left(x,a\right)$
forks over $B$, and let $M$ be any model containing $B$.\\
If (2) is true then $M\ind_{B}^{f}B$, so by right extension we
may assume wlog that $M\ind_{B}^{f}a$.\\
If (3) is true, then $B\ind_{B}^{f}a$ (even $B\ind_{B}^{u}a$).
So by left extension we can assume wlog that $M\ind_{B}^{f}a$. \\
So in both cases we are in a situation where we have a model $M$
that satisfies $M\ind_{B}^{f}a$. Hence, by \ref{cla:ForkingPreserves},
$\varphi\left(x,a\right)$ forks over $M$. By \ref{cor:main}, $\varphi\left(x,a\right)$
divides over $M$, so it also divides over $B$.
\end{proof}
The next corollary is stronger than both \ref{cor:main} and \ref{cor:GoodBasis}:
\begin{cor}
\label{cor:inviffist}A set $B$ is an extension base for $\ind^{\ist}$
iff it is an extension base for $\ind^{i}$. In this case, by the
previous theorem, forking equals dividing over $B$.\end{cor}
\begin{proof}
If $B$ is an extension base for $\ind^{\ist}$, it is an extension
base for $\ind^{i}$ by definition. On the other hand, if $B$ is
an extension base for $\ind^{i}$, then, since $\ind^{i}$ is at least
as strong as $\ind^{f}$, $B$ is an extension base for $\ind^{f}$,
so forking equals dividing over $B$ by the previous theorem. By corollary
\ref{cor:istExist}, we are done (since if $\varphi\left(x,a\right)$
forks over $B$, it divides over $B$ so it quasi-divides over $B$). 
\end{proof}

\subsection{Some corollaries for dependent theories.\protect \\
}

Assume $T$ is dependent. We shall see some consequences about the
behavior of forking.
\begin{thm}
The following are equivalent for $B$:
\begin{enumerate}
\item Forking equals dividing over $B$.
\item $B$ is an extension base for $\ind^{f}$.
\item $\ind^{f}$ has left extension over $B$.
\item $B$ is an $\ind^{\ist}$ extension base.
\end{enumerate}
\end{thm}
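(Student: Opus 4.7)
The equivalence of (1), (2), and (3) is precisely Theorem \ref{thm:MainThmGeneral}, which has already been established (combining \ref{thm:FinalVersion} with \ref{thm:EasyDirection}) and does not require dependence. So the only new content in the present theorem is the equivalence with (4); the plan is to reduce this to Corollary \ref{cor:ExtenBaseForIST}.

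The implication (4) $\Rightarrow$ (1) is the easy direction and requires no dependence hypothesis. By definition of $\ind^{ist}$, a strictly invariant extension of $tp(a/A)$ is in particular a global extension that does not fork over $A$, so $a\ind_A^{ist} A$ implies $a\ind_A^{f} A$. Thus every extension base for $\ind^{ist}$ is automatically an extension base for $\ind^{f}$.

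For (1) $\Rightarrow$ (4), the key observation is the identification $\ind^f = \ind^i$ valid in any dependent theory (item (5) of Fact \ref{fac:PreindSat}). Hence assuming (1), the set $A$ is also an extension base for $\ind^i$. Now Corollary \ref{cor:ExtenBaseForIST} applies directly and yields that $A$ is an extension base for $\ind^{ist}$, which is (4). I expect no real obstacle here: once (1)--(3) are in place the proof is a one-line bookkeeping argument that simply quotes the non-trivial work already packaged in \ref{cor:ExtenBaseForIST} (itself built on the broom lemma and \ref{cla:AExtBase}). The only subtlety to keep in mind is that the identification $\ind^f=\ind^i$ genuinely needs dependence, which is exactly why the theorem is stated in this subsection rather than in the $\NTPT$ generality of the preceding results.
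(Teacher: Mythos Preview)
Your proposal is correct and follows essentially the same route as the paper: the equivalence of (1)--(3) is quoted from Theorem~\ref{thm:MainThmGeneral}, the implication (4) $\Rightarrow$ (1) is immediate since $\ind^{ist}$ is stronger than $\ind^{f}$, and (1) $\Rightarrow$ (4) goes through the identification $\ind^{f}=\ind^{i}$ in dependent theories together with Corollary~\ref{cor:ExtenBaseForIST}. There is nothing to add.
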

\begin{proof}
(1) -- (3) are equivalent by \ref{thm:MainForking}. If $B$ is an
extension base for $\ind^{\ist}$, then it is an extension base for
$\ind^{f}$, and we are done by the same theorem. Recall that in a
dependent theory $\ind^{f}=\ind^{i}$, so if $B$ is an extension
base for $\ind^{f}$, it is an extension base for $\ind^{i}$, so
by \ref{cor:inviffist}, also for $\ind^{\ist}$.
\end{proof}
Assume from now on that forking equals dividing over $B$ (for instance,
$B$ is a model).
\begin{cor}
The following are equivalent for a formula $\varphi\left(x,a\right)$:
\begin{itemize}
\item $\varphi$ forks over $B$.
\item $\varphi$ quasi Lascar divides over $B$: there are $\left\{ e_{i}\left|i<m\right.\right\} $
such that $e_{i}\equiv_{B}^{L}a$ and $\left\{ \varphi\left(x,e_{i}\right)\right\} $
is inconsistent.
\end{itemize}
\end{cor}
\begin{proof}
If $\varphi\left(x,a\right)$ forks over $B$, then it quasi Lascar
divides because forking equals dividing over $B$. If $\varphi\left(x,a\right)$
does not fork over $B$, then extend it to $p$, a global non forking
type over $B$. By dependence, $p$ is  Lascar invariant over $B$.
This means that it contains all Lascar conjugates of $\varphi$ over
$B$, and in particular it is impossible for $\varphi$ to quasi Lascar
divide.\end{proof}
\begin{defn}
We say that dividing over $B$ is type definable when for every formula
$\varphi\left(x,y\right)$ there is a (partial) type $\pi\left(x\right)$
over $B$ such that $\pi\left(a\right)$ iff $\varphi\left(x,a\right)$
divides over $B$.\end{defn}
\begin{rem}
Dividing is type definable, so in dependent theories all these notions
-- dividing, forking and quasi Lascar dividing -- are type-definable
over $B$ (i.e. dependent theories are low, see \cite{Buechler})\end{rem}
\begin{proof}
(Due to Itai Ben Yaacov) First we shall see that for any set $B$,
if $\varphi\left(x,a\right)$ divides over $B$ then it $k$ divides
over $B$, with $k=\alt\left(\varphi\right)$. If $\left\langle a_{i}\left|i<\omega\right.\right\rangle $
is an indiscernible sequence witnessing $m>k$ dividing but not $k$
dividing, it means that $\exists x\bigwedge_{i<k}\varphi\left(x,a_{i}\right)$,
and by indiscernibility, $\exists x\bigwedge_{i<k}\varphi\left(x,a_{mi}\right)$.
So assume $\varphi\left(c,a_{mi}\right)$ for $i<k$. But for each
$i$, there must be some $mi<j_{i}\leq mi+m-1$ such that $\neg\varphi\left(c,a_{j_{i}}\right)$.
This is a contradiction to the definition of the alternation rank
(see definition \ref{def:altRank}).\\
The remark now follows: The type $\pi\left(y\right)$ says that
there exists a sequence $\left\langle y_{i}\left|i<\omega\right.\right\rangle $
of elements having the same type as $y$ over $B$, and that every
subset of size $k$ of formulas of the form $\varphi\left(x,y_{i}\right)$
is inconsistent. 
\end{proof}
The following is a strengthening of \cite[Lemma 8.10]{HP}
\begin{cor}
Let $r$ be a partial type which is Lascar invariant over $B$. Then
there exists some global $B$-Lascar invariant extension of $r$.\end{cor}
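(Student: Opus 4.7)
The plan is to take as a candidate global extension any completion of the partial type
\[
\Sigma(x) := p(x) \cup \{\neg\psi(x,c) : \psi(x,c) \text{ is a formula forking over } A\}.
\]
Any complete $q \in S(\C)$ containing $\Sigma$ extends $p$ and, by construction, contains no formula forking over $A$; hence $q$ does not fork over $A$. Since $T$ is dependent, fact \ref{fac:PreindSat}(5) gives $\ind^{f} = \ind^{i}$, so $q$ is (strongly) Lascar invariant over $A$, which is what we want. It therefore suffices to show that $\Sigma$ is consistent.

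Suppose for contradiction that $\Sigma$ is inconsistent. By compactness we obtain formulas $\varphi_1(x,d_1),\ldots,\varphi_k(x,d_k) \in p$ and formulas $\psi_1(x,c_1),\ldots,\psi_n(x,c_n)$, each forking over $A$, such that
\[
\bigwedge_{j \le k} \varphi_j(x,d_j) \;\vdash\; \bigvee_{i \le n} \psi_i(x,c_i).
\]
Let $c$ be the concatenation of the $c_i$'s and set $\Psi(x,c) := \bigvee_i \psi_i(x,c_i)$; then $\Psi$ itself forks over $A$. By the preceding corollary --- forking equals quasi Lascar dividing over $A$, which is exactly where both the dependence of $T$ and the hypothesis on $A$ enter --- there exist Lascar conjugates $c^1,\ldots,c^m$ of $c$ over $A$ for which $\bigwedge_{r \le m} \Psi(x,c^r)$ is inconsistent.

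For each $r$ fix $\sigma_r \in AutfL(\C/A)$ mapping $c$ to $c^r$. By Lascar invariance of $p$, $\varphi_j(x,\sigma_r(d_j)) \in p$ for all $r$ and $j$, and hence $p \vdash \Psi(x,c^r)$ for every $r$. Therefore $p$ implies the inconsistent conjunction $\bigwedge_r \Psi(x,c^r)$, contradicting the consistency of $p$. This establishes consistency of $\Sigma$ and concludes the proof. The only real content beyond routine compactness is the use of \emph{quasi Lascar dividing} rather than ordinary dividing: its witnesses are closed under Lascar conjugation over $A$, which is precisely what couples with the Lascar invariance of $p$ to yield the contradiction.
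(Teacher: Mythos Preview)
Your proof is correct and follows essentially the same approach as the paper: both arguments use the preceding corollary (forking equals quasi Lascar dividing over $A$) together with the Lascar invariance of $p$ to show that $p$ does not fork over $A$, and then invoke the equality $\ind^{f}=\ind^{i}$ in dependent theories. The only difference is organizational: the paper argues directly that no finite conjunction from $p$ can quasi Lascar divide (since all its Lascar conjugates remain in $p$) and then cites the extension property of non-forking, whereas you fold the extension step into the consistency of your set $\Sigma$.
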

\begin{proof}
If $\varphi_{1},\ldots,\varphi_{n}\in r$, then $\bigwedge_{i}\varphi_{i}$
does not quasi Lascar divide over $A$ (because all the conjugates
of $\varphi_{i}$ are in $r$ for all $i$). Hence $r$ does not fork
over $B$, hence there is a global non-forking (hence Lascar invariant)
extension.
\end{proof}

\section{\label{sec:BddForking+NTP2}bounded non-forking + $\NTPT$ = Dependent}

It is well-known that stable theories can be characterized as those
simple theories in which every type over model has boundedly many
non-forking extensions (see e.g. \cite[theorem 45]{Ad}). Our aim
in this section is to prove a generalization of this fact: if non-forking
is bounded, and the theory is $\NTPT$, then the theory is actually
dependent. This gives a partial answer to a question of Adler.
\begin{defn}
We say that a pre-independence relation $\ind$ is bounded if there
is a function $f$ on cardinals such that for every type $p\left(x\right)\in S\left(C\right)$
(where $x$ is a finite tuple), and every model $M\supseteq C$, the
size of the set\[
\left\{ \tp\left(a/M\right)\left|a\models p\,\&\, a\ind_{C}M\right.\right\} \]
 is bounded by $f\left(\left|T\right|+\left|C\right|\right)$.
\end{defn}
We quote from \cite[Corollary 38]{Ad}:
\begin{fact}
The following are equivalent for a theory $T$:\end{fact}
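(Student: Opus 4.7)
The statement being an equivalence of several conditions about the behavior of non-forking (in the style of \cite[Corollary 38]{Ad}, and given the section title ``Bounded Forking $+\, \NTPT\, =$ Dependent''), the plan is to establish a cyclic chain of implications centered on the Lascar-invariance characterization of non-forking. The key fact I would lean on is Fact~\ref{fac:PreindSat}(5): in dependent theories, a global type does not fork over $A$ exactly when it is strongly Lascar invariant over $A$; this equivalence is what converts a qualitative statement about non-forking into a cardinality bound.

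First, I would derive the bounded-non-forking direction from dependence (or more generally from the invariance equation $\ind^f = \ind^i$) by a direct counting argument. A global strongly Lascar invariant type over a small set $A$ is determined by its restriction to $\mathrm{bdd}(A)$, so the number of global non-forking extensions of any $p \in S(A)$ is bounded by $2^{|A|+|T|}$. This immediately discharges any implication of the form ``invariance of non-forking'' $\Rightarrow$ ``bounded non-forking'' in the quoted list.

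Next, to obtain invariance (or dependence) from a boundedness hypothesis I would argue contrapositively. If some global non-forking extension $q$ of $p \in S(A)$ fails to be Lascar invariant over $A$, then the orbit $\{\sigma(q) : \sigma \in \mathrm{AutfL}(\C/A)\}$ consists of pairwise distinct non-forking extensions of $p$ (non-forking is $\mathrm{AutfL}(\C/A)$-invariant), and this orbit is unbounded, contradicting the assumption. Analogously, in the variant that passes through the independence property, one takes a long $A$-indiscernible sequence $\langle a_i : i < \lambda\rangle$ together with parameters $b_u$ for $u \subseteq \lambda$ with $\models \varphi(b_u,a_i) \iff i \in u$, extends each $b_u$ to a global coheir over a small model $M \supseteq A \cup \{a_i\}$ (coheirs never fork by Fact~\ref{fac:PreindSat}(1)), and observes that distinct $u$'s give pairwise distinct coheirs, yielding $2^\lambda$ many non-forking global extensions of a single type over $A$.

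The main obstacle is in the converse direction: ensuring genuine distinctness of the conjugates or coheirs produced from a failure of invariance/dependence. One needs the witnessing parameters to live inside a sufficiently saturated ambient model so that different automorphisms or different index sets $u$ leave different traces, and the length $\lambda$ must be chosen so as to outrun whatever bound the hypothesis supplies. This is a saturation/compactness bookkeeping step rather than a deep issue, but it is the point at which the precise definition of ``bounded'' in the quoted conditions has to be matched to the construction, and where the cyclic organization of implications (as opposed to a bidirectional proof at every arrow) pays off by letting each direction use only the most convenient characterization.
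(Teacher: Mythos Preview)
The paper does not prove this Fact; it is quoted verbatim from \cite[Section 6, Corollary 38]{Ad} and used as a black box. So there is no ``paper's own proof'' to compare against---the authors simply cite the equivalence (bounded non-forking over models $\Leftrightarrow$ non-forking $=$ invariance over models) and immediately reduce the main theorem of the section to exhibiting a single global non-forking, non-invariant type.

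Your proposal also misidentifies the content of the Fact. It is a two-item equivalence, and neither item is ``$T$ is dependent'' or anything about the independence property; those enter only in the \emph{Theorem} that follows the Fact, whose proof is entirely different (it constructs, under $\NTPT$ plus IP, a specific non-dividing formula over a model). So the passages in your sketch about deriving boundedness ``from dependence'' and the ``variant that passes through the independence property'' are aimed at the wrong target.

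That said, the part of your sketch that is actually relevant to the cited equivalence---counting invariant types for $(2)\Rightarrow(1)$ and the $\mathrm{Aut}(\C/M)$-orbit argument for $(1)\Rightarrow(2)$---is the standard line and is essentially correct. One caution: the bare assertion that the orbit of a non-invariant global $q$ is ``unbounded'' is not immediate from the stabilizer being proper; you need an extra step (e.g.\ take a long $M$-indiscernible sequence witnessing the splitting, produce conjugates of $q$ separating every ordered pair, and count) to push the orbit size past any prescribed bound. Your cardinality bound in the counting direction should also be $2^{2^{|M|+|T|}}$ rather than $2^{|M|+|T|}$, since an $M$-invariant global type is determined by a function from $S(M)$ to truth values, not by its restriction to $M$.
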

\begin{enumerate}
\item $\ind^{f}$ is bounded.
\item $\ind^{f}$ is bounded by the function $f\left(\kappa\right)=2^{2^{\kappa}}$.
\item $\ind^{f}=\ind^{i}$.
\end{enumerate}
The question Adler asks in \cite{Ad} is whether it is true that $T$
is dependent iff $\ind^{f}$ is bounded. The answer in general is
no (see \cite{ArtyomTemp}), but under the assumption of $\NTPT$
it is true.
\begin{thm}
\label{thm:bddforkingThm}Assume $T$ is $\NTPT$, and that $\ind^{f}$
is bounded. Then $T$ is dependent.\end{thm}
\begin{proof}
Assume $\varphi\left(x,y\right)$ has the independence property. This
means that there is an infinite set $A$ of tuples, such that for
any subset $B\subseteq A$, there is some $b$ such that for all $a\in A$,
$\varphi\left(b,a\right)$ iff $a\in B$. Let $r\left(x\right)=\left\{ x\neq a\left|a\in A\right.\right\} $
be a partial type over $A$. Since it is finitely satisfiable in $A$
there is a global type $p$ containing $r$ which is finitely satisfied
in $A$. Let $q=p^{\left(2\right)}$. Denote $\psi\left(x,y,z\right)=\varphi\left(x,y\right)\land\neg\varphi\left(x,z\right)$.\\
Note that if $M\supseteq A$ is a model and $b\equiv_{M}c$ then
$\psi\left(x,b,c\right)$ forks over $M$ (otherwise there is a global
non-forking type over $M$ which is not invariant over $M$ in contradiction
to our assumption) and hence divides over $M$. \\
We build by induction on $\alpha<\omega_{1}$ a sequence of indiscernible
sequences $J_{\alpha}=\left\langle I_{i}\left|i<\alpha\right.\right\rangle $
such that
\begin{enumerate}
\item $J_{\alpha'}\subseteq J_{\alpha}$ for $\alpha'<\alpha$.
\item $I_{i}=\left\langle a_{i,j}\left|j<\omega\right.\right\rangle $.
\item For all $i<\alpha$, $j<\omega$, $a_{i,j}\models q|_{AJ_{i}}$.
\item For all $i<\alpha$, $I_{i}$ witnesses the dividing of $\psi\left(x,a_{i,0}\right)$
(over $\emptyset$). 
\end{enumerate}
For $\alpha=0$ there is nothing to do, for $\alpha$ limit we take
the union.\\
For $\alpha+1$: Let $M$ be a model containing $AJ_{\alpha}$.
Let $a_{\alpha,0}\models q|_{M}$. Then $\psi\left(x,a_{\alpha,0}\right)$
divides over $M$, and let $I_{\alpha}$ witness this. It is easy
to see that all demands are met.\\
Since the array is of length $\omega_{1}$, there is some $k$
such that for infinitely many $i<\omega_{1}$, $I_{i}$ witnesses
$k$-dividing . Wlog, these are the first $\omega$. It follows that
for every vertical path $\eta:\omega\to\omega$, $\tp\left(\left\langle a_{i,\eta\left(i\right)}\left|i<\omega\right.\right\rangle /A\right)=q^{\left(\omega\right)}|_{A}$.\\
Now we shall show that the set $\left\{ \psi\left(x,a_{i,0}\right)\left|i<\omega\right.\right\} $
is consistent and reach a contradiction to $\NTPT$.\\
Denote $a_{i}=a_{i,0}=\left(b_{i},c_{i}\right)$. Note that by
the choice of $p$ and $q$, for every formula $\phi\left(x_{0},y_{0},\ldots,x_{n-1},y_{n-1}\right)$,
if $\phi\left(a_{0},\ldots,a_{n-1}\right)$, then there are pairwise
distinct $b_{0}',c_{0}',\ldots,b_{n-1}',c_{n-1}'\in A$ such that
\[
\phi\left(b_{0}',c_{0}',\ldots,b_{n-1}',c_{n-1}'\right).\]
For $n<\omega$, let $\phi=\neg\exists x\bigwedge_{i<n}\psi\left(x,a_{i}\right)$,
then there are pairwise distinct \\
$b_{0}',c_{0}',\ldots,b_{n-1}',c_{n-1}'\in A$ such that $\neg\exists x\bigwedge_{i<n}\psi\left(x,b_{i}',c_{i}'\right)$,
which contradicts the choice of $\psi$, i.e. this set is consistent. 
\end{proof}

\section{\label{sec:optimality-of-results}optimality of results}

In general, forking is not the same as dividing, and Shelah already
gave an example in \cite[III,2]{Sh:c}. Kim gave another example in
his thesis (\cite[Example 2.11]{KimThesis}) -- circular ordering.
Both examples were over the empty set, and the theory was dependent.\\
Here we give $2$ examples. The first shows that outside the realm
of $\NTPT$, our results are not necessarily true, and the second
shows that even in dependent theories, forking is not the same as
dividing even over sets containing models.\\
In both examples, we use the notion of a (directed) circular order,
so here is the definition:
\begin{defn}
A circular order on a finite set is a ternary relation obtained by
placing the points on a circle and taking all triples in clockwise
order. For an infinite set, a circular order is a ternary relation
such that the restriction to any finite set is a circular order. \\
A first order definition is: a circular order is a ternary relation
$C$ such that for every $x$, $C\left(x,-,-\right)$ is a linear
order on $\left\{ y\left|y\neq x\right.\right\} $ and $C\left(x,y,z\right)\to C\left(y,z,x\right)$
for all $x,y,z$. 
\end{defn}

\subsection{Example 1.}

Here we present a variant of an example found by Martin Ziegler, showing
that
\begin{enumerate}
\item forking and dividing over models are different in general,
\item strictly non-forking types need not exist over models (see \ref{rem:StrictNF}),
so in particular, strictly invariant types and non-forking heirs need
not necessarily exist over models.
\end{enumerate}
Let $L$ be a $2$ sorted language: one sort $P$ for \textquotedbl{}points\textquotedbl{},
for which we will use the variables $t,t_{0},\ldots$ and another
$S$ for \textquotedbl{}sets\textquotedbl{}, for which we will use
the variables $s,s_{0},\ldots$. $L$ consists of $1$ binary relation
$E\left(t,s\right)$ to denote \textquotedbl{}membership\textquotedbl{}
(so a subset of $P\times S$), and two 4-ary relations: $C\left(t_{1},t_{2},t_{3},s\right)$
and $D\left(s_{1},s_{2},s_{3},t\right)$.\\
Consider the following universal theory $T^{\forall}$ saying:
\begin{enumerate}
\item For all $s$, $C\left(-,-,-,s\right)$ is a circular order on the
set of all $t$ such that $E\left(t,s\right)$, and if $C\left(t_{1},t_{2},t_{3},s\right)$
then $E\left(t_{i},s\right)$ for $i=1,2,3$, and
\item For all $t$, $D\left(-,-,-,t\right)$ is a circular order on the
set of all $s$ such that $\neg E\left(t,s\right)$, and if $D\left(s_{1},s_{2},s_{3},t\right)$
then $\neg\left(E\left(t,s_{i}\right)\right)$ for $i=1,2,3$. 
\end{enumerate}
This theory has the joint embedding property and the amalgamation
property as can easily be verified by the reader. Hence, as the language
has no function symbols, by Fraïssé's theorem it has a model completion
$T$, so $T$ eliminates quantifiers (see \cite[Theorem 7.4.1]{Hod}).\\
Let $M$ be a model of $T$. We choose $t_{0},s_{0}\in\C\backslash M$,
such that for all $t\in M$, $\neg E\left(t,s_{0}\right)$ and for
all $s\in M$, $E\left(t_{0},s\right)$. Now, $E\left(x,s_{0}\right)$
forks over $M$, and $\neg E\left(t_{0},y\right)$ forks over $M$,
but none of them (quasi) divides.\\
Why? Non quasi dividing is straightforward from the construction
of $T$.\\
We show that $\neg E\left(t_{0},y\right)$ forks (for $E\left(x,s_{0}\right)$
use the same argument): choose some circular order on $P^{M}$, and
choose $s_{i}'$ for $i<\omega$ such that: 
\begin{itemize}
\item $\neg E\left(t_{0},s'_{i}\right)$ for $i<\omega$.
\item $D\left(s'_{i},s'_{j},s'_{k},t_{0}\right)$ whenever $i<j<k$.
\item For all $i<\omega$ and for all $t\in M$ we have $E\left(t,s'_{i}\right)$,
and $C\left(-,-,-,s'_{i}\right)$ orders $P^{M}$ using the pre-chosen
circular order.
\end{itemize}
Now, \[
\neg E\left(t_{0},y\right)\vdash D\left(s_{0}',y,s'_{1},t_{0}\right)\vee D\left(s_{1}',y,s_{0}',t_{0}\right)\vee y=s_{0}'\vee y=s_{1}'\]
 and $D\left(s_{0}',y,s_{1}',t_{0}\right)$ divides over $Mt_{0}$
as witnessed by $\left\langle s_{i}'s_{i+1}'\left|i<\omega\right.\right\rangle $,
and so does $D\left(s_{1}',y,s_{0}',t_{0}\right)$, because for all
$n$, $s_{1}'s_{0}'\equiv_{Mt_{0}}s_{n+1}'s_{n}'$. \\
Let $p\left(t\right)$ be $\tp\left(t_{0}/M\right)$. We show that
$p$ is not a strictly non-forking type over $M$: suppose $q$ is
a global strictly non-forking extension, and let $t_{0}'\models q|_{s_{0}}$.
Then $t_{0}'\ind_{M}^{f}s_{0}$ and $s_{0}\ind_{M}^{f}t_{0}'$. So
surely $\neg E\left(t,s_{0}\right)\in q$, so $\neg E\left(t_{0}',s_{0}\right)$
holds. But $t_{0}'\equiv_{M}t_{0}$ so $s_{0}\nind_{M}^{f}t_{0}'$
-- a contradiction.\\
Note that $T$ has the tree property of the second kind: Let $s_{i}$
for $i<\omega$ be such that they are all different, and for each
$i$, let $t_{j}^{i}$ for $j<\omega$, be such that for $j<k<l$,
$C\left(t_{j}^{i},t_{k}^{i},t_{l}^{i},s_{i}\right)$. The array $\left\{ C\left(t_{j}^{i},x,t_{j+1}^{i},s_{i}\right)\left|i,j<\omega\right.\right\} $
witnesses $\mbox{TP}_{\mbox{2}}$.

\subsection{Example 2.}

We give an example showing that even if $T$ is dependent, and $S$
contains a model, forking is not necessarily the same as dividing
over $S$. Hence models are not good extension bases for non-forking
in dependent theories in general (see \ref{def:goodbase}).\\
Let $L$ the language $\left\{ C,E\right\} $ where $E$ is a binary
relation and $C$ is a ternary relation. Let $T^{\forall}$ be the
universal theory saying that $E$ is an equivalence relation and that
$C$ induces a circular order on every equivalence class, and that
in addition $\forall x,y,z\left(C\left(x,y,z\right)\to E\left(x,y\right)\land E\left(y,z\right)\right)$.\\
This theory has the JEP and AP so it has a model completion (as
in Example 1). \\
Moreover, $T$ is dependent: To show this, it's enough to show
that all formulas $\varphi\left(x,y\right)$ where $x$ is one variable
have finite alternation rank. As $T$ eliminates quantifiers, it's
enough to consider atomic formulas (see e.g. \cite[Section 1]{Ad}),
and this is straightforward and left to the reader.\\
Consider $T^{\eq}$. It is also dependent. \\
Let $M$ be a model. Let $c\in\C\backslash M$ be a code of an
$E$-equivalence class without any $M$-points. Then for every $a_{1}\neq a_{2}$
in this class, both $C\left(a_{2},x,a_{1}\right)$ and $C\left(a_{1},x,a_{2}\right)$
divide over $Mc$ (like in Example 1). So we have \[
\pi_{E}\left(x\right)=c\vdash C\left(a_{1},x,a_{2}\right)\vee C\left(a_{2},x,a_{1}\right)\vee x=a_{1}\vee x=a_{0}\]
 forks but does not divide over $Mc$ (where $\pi_{E}$ is the canonical
projection into the sort of codes of $E$-classes).

\section{Further remarks}

Our understanding of forking in dependent theories was highly influenced
by Section 5 (Non-forking) in \cite{Sh783}. This section contains
the definition of strict non-forking, that we generalized to $\ind^{\ist}$
(in dependent theories they are equal). Essentially, the ideas of
the proof of Lemma \ref{lem:Kim} ({}``Kim's Lemma'') appears there.
Alex Usvyatsov also noticed a variant of that lemma independently.\\
The claim and proof of \ref{cla:globalTypeWitness}, with some
modifications and generalizations is due to Usvyatsov and Onshuus
in \cite{OnUs1}. It should be noted that H. Adler and A. Pillay were
the first to realize that $\NTPT$ is all the assumption one needs
there.\\
Alex Usvyatsov noticed that one can use the broom lemma to prove
that types over models can be extended to global non-forking heirs
(see \cite{Us1}). In fact, this follows directly from \ref{cla:symmetrizers}.

\section{\label{sec:Questions-and-remarks}Questions and remarks}
\begin{enumerate}
\item Are simple theories $\ind^{i}$-extensible $\NTPT$ theories?
\item Can similar results be proved for $\mbox{NSOP}$ theories? Or at least
$\mbox{NTP}_{1}$ theories? 
\item It would be nice to find some purely semantic characterization of
theories in which forking equals dividing over models. For example
we know that all $\NTPT$ theories are such, however the opposite
is not true: there is a theory with $\mbox{TP}_{\mbox{2}}$ in which
forking equals dividing (essentially the example from section \ref{sec:optimality-of-results},
but with dense linear orders instead of circular ones).
\end{enumerate}
\bibliographystyle{alpha}
\bibliography{common}

\begin{thebibliography}{Kim98}

\bibitem[Adl05]{adlerTh-2005}
Hans Adler.
\newblock Explanation of independence, 2005.
\newblock \url{http://www.citebase.org/abstract?id=oai:arXiv.org:math/0511616}.

\bibitem[Adl08]{Ad}
Hans Adler.
\newblock An introduction to theories without the independence property.
\newblock {\em Archive of Mathematical Logic}, 2008.
\newblock accepted.

\bibitem[Bue99]{Buechler}
Steven Buechler.
\newblock Lascar strong types in some simple theories.
\newblock {\em Journal of Symbolic Logic}, 64(2):817--824, 1999.

\bibitem[CK]{ArtyomTemp}
Artem Chernikov and Itay Kaplan.
\newblock On non-forking spectra.
\newblock {\em work in progress}.

\bibitem[Dol04]{Dol}
Alfred Dolich.
\newblock Forking and independence in o-minimal theories.
\newblock {\em J. Symbolic Logic}, 69(1):215--240, 2004.

\bibitem[Hod93]{Hod}
Wilfrid Hodges.
\newblock {\em Model Theory}, volume~42 of {\em Encyclopedia of mathematics and
  its applications}.
\newblock Cambridge University Press, Great Britain, 1993.

\bibitem[HP]{HP}
Ehud Hrushovski and Anand Pillay.
\newblock On {NIP} and invariant measures.
\newblock {\em submitted}.

\bibitem[Ker07]{AvivThesis}
Aviv Keren.
\newblock Equivalence relations \& topological automorphism groups in simple
  theories.
\newblock Master's thesis, Hebrew University of Jerusalm, Israel, 2007.

\bibitem[Kim96]{KimThesis}
Byunghan Kim.
\newblock {\em Simple first order theories}.
\newblock PhD thesis, University of Notre Dame, 1996.

\bibitem[Kim98]{KimForking}
Byunghan Kim.
\newblock Forking in simple unstable theories.
\newblock {\em J. London Math. Soc. (2)}, 57(2):257--267, 1998.

\bibitem[OUa]{OnUs1}
Alf Onshuus and Alexander Usvyatsov.
\newblock On dp-minimality, strong dependence and weight.
\newblock {\em Journal of symbolic logic}.
\newblock accepted.

\bibitem[OUb]{OnUs2}
Alf Onshuus and Alexander Usvyatsov.
\newblock Orthogonality and domination in unstable theories.
\newblock {\em Fundamenta Mathematicae}.
\newblock accepted.

\bibitem[Poi81]{poiThUnstable}
Bruno Poizat.
\newblock Th\'eories instables.
\newblock {\em J. Symbolic Logic}, 46(3):513--522, 1981.

\bibitem[She]{Sh783}
Saharon Shelah.
\newblock Dependent first order theories, continued.
\newblock {\em Israel Journal of Mathematics}.
\newblock accepted.

\bibitem[She80]{SheSimple}
Saharon Shelah.
\newblock Simple unstable theories.
\newblock {\em Ann. Math. Logic}, 19(3):177--203, 1980.

\bibitem[She90]{Sh:c}
Saharon Shelah.
\newblock {\em Classification theory and the number of nonisomorphic models},
  volume~92 of {\em Studies in Logic and the Foundations of Mathematics}.
\newblock North-Holland Publishing Co., Amsterdam, second edition, 1990.

\bibitem[Sta]{Sta}
Sergei Starchenko.
\newblock A note on {D}olich's paper.
\newblock {\em preprint}.

\bibitem[UK]{AlexTemp}
Alexander Usvyatsov and Itay Kaplan.
\newblock Strict non-forking in {NIP} theories.
\newblock {\em work in progress}.

\bibitem[Usv]{Us1}
Alex Usvyatsov.
\newblock Morley sequences in dependent theories.
\newblock {\em submitted}.

\end{thebibliography}

\address{Artem Chernikov\\
Université Lyon 1\\
CNRS, Institut Camille Jordan UMR5208\\
43 boulevard du 11 novembre 1918\\
69622 Villeurbanne Cedex\\
France\\
}

\email{\texttt{artyom.chernikov@gmail.com }~\\
}

\address{Itay Kaplan\\
Fachbereich Mathematik und Statistik\\
Universität Konstanz \\
78457 Konstanz\\
Germany}

\email{\texttt{itay.kaplan@uni-konstanz.de}}
\end{document}